\numberwithin{equation}{section}
\newtheorem{thm}[equation]{Theorem}
\newtheorem{prop}[equation]{Proposition}
\newtheorem{lemma}[equation]{Lemma}
\newtheorem{cor}[equation]{Corollary}
\theoremstyle{definition}
\newtheorem{rmk}[equation]{Remark}
\newtheorem{definition}[equation]{Definition}
\newtheorem{prop-def}{Proposition-Definition}
\newcommand{\bP}{\mathbb{P}}
\DeclareMathOperator{\CH}{CH}
\newcommand{\M}{\mathcal{M}}
\DeclareMathOperator{\Aut}{Aut}
\DeclareMathOperator{\Spec}{Spec}
\def\udots{\mathinner{\mkern1mu\raise\p@
\vbox{\kern7\p@\hbox{.}}\mkern2mu
\raise4\p@\hbox{.}\mkern2mu\raise7\p@\hbox{.}\mkern1mu}}
\renewcommand{\bar}[1]{#1\llap{$\overline{\phantom{\rm#1}}$}}
\newif\ifpdf
\begin{document}
\title{Chow rings of moduli spaces of genus $0$ curves with collisions}
\author{William C. Newman}
\begin{abstract}
Introduced in \cite{BB}, simplicially stable spaces are alternative compactifications of $\mathcal{M}_{g,n}$ generalizing Hassett's moduli spaces of weighted stable curves. We give presentations of the Chow rings of these spaces in genus $0$ using techniques developed by the author in \cite{Me1}. When considering the special case of $\bar\M_{0,n}$, this gives a new proof of Keel's presentation of $\operatorname{CH}(\bar\M_{0,n})$.
\end{abstract}
\maketitle
\section{Introduction}
The simplicially stable spaces were introduced in by Blankers and Bozlee in \cite{BB}.  They are compactifications of $\M_{g,S},$ the moduli space of smooth genus $g$ curves with markings in a finite set $S$, which are indexed by (abstract) simplicial complexes $\mathcal K$ and are denoted $\bar\M_{g,\mathcal K}$. The simplicial complex controls whether the marked points may ``collide'' in $\bar\M_{g,\mathcal K}$ (see Definition~\ref{def} for a precise definition). For example, given distinct $s,t\in S$, if $\{s,t\}\in \mathcal K$ and $(C,\{p_s: s\in S\})\in \bar\M_{g,\mathcal K}$, then we may have $p_s=p_t$, in contrast with the usual compactification $\bar\M_{g,S}$. 

Simplicially stable spaces generalize the earlier moduli spaces of weighted stable curves first studied by Hassett in \cite{Hassett}. Given weight data $\mathcal A=\{a_s:s\in S\}$, one gets a simplicial complex $\mathcal K_{\mathcal A}:=\{T\subseteq S| \sum_{s\in T} a_s <1\}$, and one has $\bar\M_{g,\mathcal A}=\bar\M_{g,\mathcal K_{\mathcal A}}.$

Much recent work has focused on the Chow rings of $\bar\M_{g,n}$ \cite{BDL2,CL2,DLPV,ELarson}. We study the Chow ring of these simplicially stable spaces in genus $0$. Notably, these spaces give all stable (in the sense of Smyth \cite{Smyth}) smooth modular compactifications of $\M_{0,n}$ (combine \cite[Theorem 7.8]{MSvAX}] with \cite[Lemma 4.15]{BB}). Previously, Keel computed the Chow ring of $\bar\M_{0,n}$ in \cite{Keel}. This was generalized by Kannan, Karp, and Li in \cite{KKL}, where the Chow rings of so-called ``heavy-light'' Hassett spaces were calculated in genus $0$ using tropical methods.

Before stating the main theorem, we recall Keel's presentation of $\CH(\bar\M_{0,n})$ from \cite{Keel}. Given a subset $I\subseteq [n]$ so that $2\leq \# I\leq n-2$, let $D_I$ be the closure of the set of curves in $\bar\M_{0,n}$ with two components, with markings from $I$ on one component and markings from the complement $I^c$ on the other. 
\begin{thm}
    The Chow ring of $\bar\M_{0,n}$ is generated by the classes $[D_I]$ modulo only the relations
    \begin{align*}
        [D_I]=[D_{I^c}]&,\\
        [D_I][D_J]=0 &\text{ if }I\not\subseteq J,J^c\text{ and }J,J^c\not\subseteq I, \text{ and }\\
        \sum_{\substack{i,j\in I\\ k,\ell \not\in I}} [D_I]=\sum_{\substack{i,k\in I\\ j,\ell \not\in I}} [D_I]& \text{ for }i,j,k,\ell\in [n]\text{ distinct (WDVV Relations).}
    \end{align*}    
\end{thm}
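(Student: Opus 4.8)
The plan is to realize $\CH(\bar\M_{0,n})$ as a quotient of the polynomial ring $R=\Z[x_I]$ with one generator $x_I$ for each unordered partition $\{I,I^c\}$, $2\le\#I\le n-2$. The assignment $x_I\mapsto[D_I]$ defines a ring homomorphism $\varphi\colon R\to\CH(\bar\M_{0,n})$, and the theorem is equivalent to three statements: $\varphi$ is surjective; $\varphi$ annihilates the three displayed families of relations, hence descends to $\overline\varphi\colon R/\mathcal I\to\CH(\bar\M_{0,n})$, where $\mathcal I$ is the ideal they generate; and $\overline\varphi$ is injective. I would establish these in order.

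Surjectivity I would obtain from the stratification of $\bar\M_{0,n}$ by topological type. The open stratum $\M_{0,n}$ is the complement of a hyperplane arrangement in affine space, so $\CH^{>0}(\M_{0,n})=0$, and the localization sequence shows that every class of positive codimension is supported on the boundary $\bigcup_I D_I$. Because the boundary divisors cross normally, each deeper stratum is a transverse intersection of the $D_I$ along a compatible (nested) family and is itself a product of smaller spaces $\bar\M_{0,k}$; inducting on the depth of the stratification shows that the $[D_I]$ generate $\CH(\bar\M_{0,n})$ as a ring.

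That the relations hold is largely geometric. The identity $[D_I]=[D_{I^c}]$ holds because $\{I,I^c\}$ is unordered, so $D_I$ and $D_{I^c}$ name the same divisor. The vanishing $[D_I][D_J]=0$ for incompatible $I,J$ holds because no stable curve can have nodes inducing both partitions simultaneously, so $D_I\cap D_J=\varnothing$. The WDVV relations are pullbacks from $\bar\M_{0,4}\cong\bP^1$: under the forgetful morphism remembering the four markings $i,j,k,\ell$, the three boundary points of $\bP^1$ are rationally equivalent, and expanding the pullback of each as the sum of the boundary divisors that separate $i,j,k,\ell$ in the prescribed way yields precisely the displayed equalities.

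The crux is injectivity of $\overline\varphi$, i.e.\ that $\mathcal I$ accounts for all relations. I would argue by induction on $n$ through the universal-curve morphism $\pi\colon\bar\M_{0,n}\to\bar\M_{0,n-1}$. Realizing $\bar\M_{0,n}$ as an iterated blowup of a $\bP^1$-fibration over $\bar\M_{0,n-1}$, along the smooth loci where the forgotten point collides with the remaining marked points or nodes, the projective-bundle and blowup formulas present $\CH(\bar\M_{0,n})$ as a free $\CH(\bar\M_{0,n-1})$-module with explicit relations; the task is to match these relations with the new members of $\mathcal I$ and to verify that the inductive hypothesis transports the old relations correctly through $\pi^*$. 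This bookkeeping --- reconciling the blowup-theoretic relations with the combinatorial families above, and controlling the strict transforms of the boundary divisors --- is where I expect the main difficulty. An alternative, probably closer to the methods of \cite{Me1}, is to produce an explicit additive basis of $R/\mathcal I$ given by monomials $\prod_I x_I^{m_I}$ supported on nested families with codimension-constrained exponents, show by reduction modulo the relations that these span, and prove their images are linearly independent in $\CH(\bar\M_{0,n})$ by pairing against complementary-dimensional strata using the known top intersection numbers; a degreewise rank comparison then forces $\overline\varphi$ to be an isomorphism.
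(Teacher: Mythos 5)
Your outline is essentially sound, but the route you choose for the crux (injectivity) is not the paper's: realizing $\bar\M_{0,n}$ as an iterated blowup over a $\bP^1$-bundle on $\bar\M_{0,n-1}$ and reconciling the blowup relations with $\mathcal I$ is precisely Keel's original argument \cite{Keel}, whose bookkeeping you correctly identify as the main difficulty and leave unexecuted. The paper deliberately avoids that route, because it does not generalize: for arbitrary $\mathcal K$ the spaces $\bar\M_{0,\mathcal K}$ need not be projective \cite{MSvAX}, so no blowup description is available. Instead the paper inducts on $\#S$, presents each $\CH(\bar\M_\Delta)$ as a cyclic module over the abstract relations ring $R_{\mathcal K}$ (Proposition~\ref{kernel}), glues these into a presentation of $\CH(\partial\bar\M_{0,\mathcal K})$, and then --- this is the key step for which your proposal has no counterpart --- uses the first higher Chow group in the extended localization sequence
\[\CH(\M_{0,S},1)\xrightarrow{\partial_1}\CH(\partial\bar\M_{0,\mathcal K})\to\CH(\bar\M_{0,\mathcal K})\to\CH(\M_{0,S})\to 0,\]
together with the fact (imported from \cite{Me1,BS2}) that the image of $\partial_1$ is generated exactly by the WDVV relations, to conclude that $\CH(\bar\M_{0,\mathcal K})$ is the free rank-one $R_{\mathcal K}$-module on $1$, i.e.\ that $h_{\mathcal K}$ is an isomorphism. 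What each approach buys: yours stays within classical projective-bundle and blowup formulas but is rigid and computation-heavy; the paper's needs two nontrivial inputs (the higher-Chow computation of $\operatorname{im}\partial_1$ and the Chow--K\"unneth property of the strata via the MKP) but is short and applies uniformly to every $\bar\M_{0,\mathcal K}$. One genuine soft spot in your sketch: both in the surjectivity induction and in your alternative basis-plus-pairing plan you treat the Chow ring of a product of smaller moduli spaces as generated by (or isomorphic to the tensor product of) the factors' Chow rings; this Chow--K\"unneth statement fails for general varieties and must be proved for these spaces --- it is exactly what the paper supplies through Corollary~\ref{MKP} and Proposition~\ref{MKPprops}(3), and in Keel's setting it is established along the way in his induction, so your argument should acknowledge and discharge this dependency rather than assume it.
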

Our main theorem is the following.
\begin{thm}\label{main}
    The Chow ring of $\bar\M_{0,\mathcal K}$ is generated by the classes of boundary divisors $D$, and the only relations are 
    \begin{align*}
        &[D]\cdot [D']=0\text{ if }D\cap D'=\emptyset \text{ and}\\
        &\text{the pushforward of the WDVV relations from } \bar\M_{0,S}.
    \end{align*}
\end{thm}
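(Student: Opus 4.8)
The plan is to study $\bar\M_{0,\mathcal K}$ through the reduction morphism $\rho\colon \bar\M_{0,S}\to\bar\M_{0,\mathcal K}$, which contracts exactly those boundary divisors $D_I$ of $\bar\M_{0,S}$ corresponding to the collisions that $\mathcal K$ permits. First I would settle \emph{generation}. The open locus $U\subseteq\bar\M_{0,\mathcal K}$ of smooth (irreducible) curves is built up by forgetting marked points one at a time, each step a fibration whose fibers are open subsets of $\bP^1$; an iterated affine-bundle argument then gives $\CH^{>0}(U)=0$. Since $\bar\M_{0,\mathcal K}$ is smooth with normal-crossings boundary, the right-exact localization sequence shows $\CH(\bar\M_{0,\mathcal K})$ is additively generated by closures of boundary strata, and each such closure is a monomial in the boundary divisors; hence the divisors $[D]$ generate the Chow ring as desired.

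Next I would check that both families of \emph{relations hold}, giving a surjection $R:=\Z[\,[D]\,]/(\text{stated relations})\twoheadrightarrow\CH(\bar\M_{0,\mathcal K})$. The disjointness relations $[D]\cdot[D']=0$ for $D\cap D'=\emptyset$ are geometrically obvious. For the WDVV relations I would use the reduction morphism: since $\bar\M_{0,S}$ is smooth projective and $\rho$ is birational, the projection formula yields $\rho_*\rho^*=\id$, so $\rho^*$ is injective and identifies $\CH(\bar\M_{0,\mathcal K})$ with a subring of $\CH(\bar\M_{0,S})$. Pushing a WDVV relation forward along $\rho_*$ preserves the equality, and each term $\rho_*[D_I]$ is either a boundary divisor $[D]$ of $\bar\M_{0,\mathcal K}$ (when $\rho|_{D_I}$ is birational onto its image) or $0$ (when $D_I$ is contracted). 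This produces precisely the ``pushforward WDVV'' linear relations among the $[D]$, which therefore hold in $\CH(\bar\M_{0,\mathcal K})$.

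The hard part will be \emph{completeness}: proving the surjection $R\twoheadrightarrow\CH(\bar\M_{0,\mathcal K})$ is injective, i.e.\ that there are no further relations. Because the paper must reprove Keel's theorem as the special case $\mathcal K=\{\text{singletons}\}$, this step cannot invoke Keel's presentation and must instead run the inductive machinery of \cite{Me1}. Concretely, I would induct on the combinatorial complexity of $(S,\mathcal K)$ via a forgetful morphism $\pi\colon\bar\M_{0,\mathcal K}\to\bar\M_{0,\mathcal K^-}$, exhibiting $\bar\M_{0,\mathcal K}$ as a controlled blow-up of (a $\bP^1$-bundle over) the smaller space $\bar\M_{0,\mathcal K^-}$, and then apply the blow-up and projective-bundle formulas to compute $\CH(\bar\M_{0,\mathcal K})$ from the inductively known presentation of $\CH(\bar\M_{0,\mathcal K^-})$. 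The crux—and the main obstacle—is the bookkeeping that translates the exceptional-divisor and projective-bundle relations coming out of the blow-up formula into exactly the disjointness and pushforward-WDVV relations, with no redundant or extraneous relations, while correctly tracking how the new collision strata enter the combinatorics. If one prefers, the same end could be reached by producing an explicit monomial (Gröbner-type) additive basis for $R$ and matching its ranks degree by degree against those of $\CH(\bar\M_{0,\mathcal K})$, but either way the combinatorial identification of the relation ideal is where the real work lies.
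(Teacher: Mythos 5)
Your first two steps (generation, and verification that the stated relations hold) are sound and essentially agree with what the paper does: generation comes from the stratification plus localization, and the relations come from transverse intersections of smooth strata and from pushing WDVV forward along the birational contraction $\pi_{\mathcal K}\colon\bar\M_{0,S}\to\bar\M_{0,\mathcal K}$. The genuine gap is in the completeness step, which is the actual content of the theorem, and your proposed route for it cannot work for general $\mathcal K$. You plan to exhibit $\bar\M_{0,\mathcal K}$ as an iterated blow-up along smooth centers of (a $\bP^1$-bundle over) a smaller space $\bar\M_{0,\mathcal K^-}$ and then run the blow-up and projective-bundle formulas. But blow-ups along closed centers and $\bP^1$-bundles preserve projectivity, so by induction on the size of $S$ this structure would force every $\bar\M_{0,\mathcal K}$ to be projective --- and these spaces are \emph{not} projective in general (\cite[Example 11.1]{MSvAX}; the paper flags precisely this as the reason Keel-style blow-up arguments cannot be adapted). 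Your fallback of matching ranks of a monomial basis of $R$ against $\CH(\bar\M_{0,\mathcal K})$ degree by degree presupposes an independent computation of the additive structure of $\CH(\bar\M_{0,\mathcal K})$, which is exactly what is not available; nothing in your write-up supplies it. Note also that injectivity of $\rho^*$ buys very little: the paper points out that it is not even clear from that inclusion that divisor classes generate.

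What the paper does instead, and what your proposal is missing, is a mechanism to control the kernel of the pushforward $\CH(\partial\bar\M_{0,\mathcal K})\to\CH(\bar\M_{0,\mathcal K})$, i.e.\ to show no relations beyond the stated ones can appear. This is done with first higher Chow groups: in the extended localization sequence
\[
\CH(\M_{0,S},1)\xrightarrow{\partial_1}\CH(\partial\bar\M_{0,\mathcal K})\to\CH(\bar\M_{0,\mathcal K})\to\CH(\M_{0,S})\to 0,
\]
the image of $\partial_1$ is identified with exactly the WDVV relations (by pushing forward the corresponding known statement for $\bar\M_{0,S}$ from \cite{Me1,BS2}). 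The rest of the argument is an induction on $\#S$, but of a different shape than yours: using that each boundary stratum closure $\bar\M_\Delta$ is a \emph{product} of smaller simplicially stable spaces (closed embedding $\xi_\Delta$), the Chow--K\"unneth property supplied by the MKP, and the inductive hypothesis, one proves that $\CH(\bar\M_\Delta)$ is the cyclic $R_{\mathcal K}$-module $R_{\mathcal K}\langle\delta\rangle/\langle\tilde\gamma\delta:\Gamma\wedge\Delta=\emptyset\rangle$. Assembling these gives an $R_{\mathcal K}$-module presentation of $\CH(\partial\bar\M_{0,\mathcal K})$, and quotienting by the image of $\partial_1$ shows $\CH(\bar\M_{0,\mathcal K})$ is free of rank one over $R_{\mathcal K}$, i.e.\ $h_{\mathcal K}$ is an isomorphism. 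Without the higher-Chow computation of $\operatorname{im}\partial_1$ (or some substitute), your surjection $R\twoheadrightarrow\CH(\bar\M_{0,\mathcal K})$ has no injectivity argument.
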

See Theorem~\ref{main2} and Theorem~\ref{main3} for more explicit versions of this theorem. By letting $\mathcal K$ be the discrete simplicial complex on $[n]$, this recovers Keel's presentation. Our proof does not generalize Keel's, so we obtain a new, short proof of Keel's result based just on the general theory of the stratification of the boundary, a Chow-Kunneth property, and some use of higher Chow groups. Moreover, Keel's original approach cannot work in the setting of an arbitrary $\bar\M_{0,\mathcal K}$: Keel's proof works by describing $\bar\M_{0,n}$ as an iterated blow up of $(\bP^1)^n$, and the spaces $\bar\M_{0,\mathcal K}$ are not necessarily projective \cite[Example 11.1]{MSvAX}.

In \cite{Me1}, the author gave new proofs of presentations of the (integer coefficient) Chow rings $\CH(\bar\M_{1,n})$ for $n\leq 4$ using higher Chow groups and the motivic Kunneth property. The same techniques are used in this paper, giving a much shorter display of their usefulness.

There is another description of $\CH(\bar\M_{0,n})$ given by Kontsevich and Manin in \cite{KM94,KM96}. This is description is additive; it describes all linear relations between the classes of the strata. The main theorem in the paper \cite{BS2}, which greatly inspired the author's work in \cite{Me1} and hence in the present paper, was an analogue/generalization of this description to $\CH(\mathfrak M_{0,n}),$ where $\mathfrak M_{0,n}$ is the stack of prestable curves. Such a description of $\CH(\bar\M_{0,\mathcal K})$ should be true and provable following the proof given in \cite{BS2}.

The paper is organized as follows: in section $2$, we give background on simplicially stable spaces and the techniques developed in \cite{Me1}; in section $3$, we discuss the stratification of $\bar\M_{g,\mathcal K}$ by dual graph; and in section $4$, the main theorem is proven. 

\subsection*{Acknowledgments}  
The author thanks Sebastian Bozlee for help with understanding simplicially stable spaces. The author also thanks Siddarth Kannan for a helpful discussion about previous results.

This material is based upon work supported by the National Science Foundation under Grant No. DMS-2231565.

\subsection*{Notation} 
\begin{itemize}
    \item We work over a field of arbitrary characteristic. All varieties are defined over this field.
    \item For a variety $X$, $\CH(X)$ denotes the total Chow group, i.e. it is the direct sum of cycles of all dimensions modulo rational equivalence. We will not need to refer to the grading. Similarly, $\CH(X,j)$ is the total $j$-th higher Chow group.
    \item The set $\{1,2,\dots,n\}$ is denoted $[n]$.
    \item For a ring $R$, we use the notation $R\langle a_1,\dots,a_n\rangle$ to denote the free $R$ module on $a_1,\dots,a_n$. Given an $R$-module with elements $f_1,\dots,f_r$, $\langle f_1,\dots,f_r\rangle$ denotes the $R$-submodule of $M$ generated by $f_1,\dots,f_r$. 
\end{itemize}
\section{Background}

\subsection{Simplicially Stable Spaces}
We recall the definition of simplicially stable spaces from \cite{BB}. Let $g\geq 0$ and $\mathcal K$ be a simplicial complex on the underlying set $S$. If $g=0$, we require that $\mathcal K$ be \emph{at least triparted}, meaning every partition of $S$ into elements of $\mathcal K$ has at least $3$ parts. 
\begin{definition}\label{def}
    The simplicially stable space $\bar\M_{g,\mathcal K}$ parameterizes nodal genus $g$ curves with smooth marked points indexed by $S$, $(C,\{p_s|s\in S\})$, with finitely many automorphisms, so that 
    \begin{itemize}
        \item for each $x\in C$, the set $\{s \in S| p_s=x\}$ is in $\mathcal K$ and
        \item for each rational tail $Z\subseteq C$, the set $\{s\in S| p_s\in Z\}$ is not in $\mathcal K$. 
    \end{itemize}
\end{definition}
In general, $\bar\M_{g,\mathcal K}$ is a smooth proper Deligne-Mumford stack \cite[Theorem 4.18]{BB}. In the genus $0$ case, by \cite[Proposition 11.5]{MSvAX}, we have even more:
\begin{thm}
    $\bar\M_{0,\mathcal K}$ is a smooth proper variety. 
\end{thm}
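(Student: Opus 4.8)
The plan is to use \cite[Theorem 4.18]{BB}, which already provides that $\bar\M_{0,\mathcal K}$ is a smooth proper Deligne--Mumford stack; the smoothness and properness are therefore free, and the entire content of the statement is to upgrade ``stack'' to ``variety.'' This splits into two claims: that every parametrized object has trivial automorphism group, so that $\bar\M_{0,\mathcal K}$ is a (smooth, proper) algebraic space, and that this algebraic space is in fact a scheme.

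First I would establish rigidity. Let $(C,\{p_s\})$ be a simplicially stable genus $0$ curve. Since $g=0$ and $C$ is connected and nodal, its dual graph is a tree and every irreducible component is a copy of $\bP^1$. The key observation is that each component $E$ carries at least three distinct \emph{special} points, where a special point is a node or one of the distinct points at which markings land: if $C$ is irreducible, the markings must occupy at least three distinct points, for otherwise $S$ would partition into at most two elements of $\mathcal K$, violating the triparted hypothesis; a component with exactly two nodes and no markings would admit a copy of $\Gm$ fixing those two points, contradicting the finiteness of automorphisms built into Definition~\ref{def}; and on a rational tail $Z$ the set $\{s\mid p_s\in Z\}\notin\mathcal K$ while the markings at each individual point of $Z$ form a face of $\mathcal K$, so they cannot all coincide, giving at least two distinct marked points plus the node. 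Now let $\phi$ be an automorphism fixing every labeled point $p_s$. Every leaf of the tree is a rational tail and hence carries a marked point, so $\phi$ preserves it; since a nontrivial automorphism of a tree must move some leaf, $\phi$ acts as the identity on the dual tree and thus preserves each component. On each component $\phi$ then fixes all of its nodes and all of its markings, i.e.\ at least three distinct points of a $\bP^1$, forcing $\phi|_E=\id$. Therefore $\Aut(C,\{p_s\})$ is trivial and $\bar\M_{0,\mathcal K}$ is a smooth proper algebraic space.

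The hard part will be promoting this algebraic space to a scheme. The versal deformation of a genus $0$ nodal marked curve is smooth and factors as a product of one-parameter node-smoothings, so $\bar\M_{0,\mathcal K}$ is \emph{\'etale}-locally a smooth scheme with normal-crossings boundary, i.e.\ it carries toric charts. The plan is to descend these to a genuine Zariski-open cover by schemes: in genus $0$ the combinatorial type of a curve --- its tree together with the distribution of the labeled markings --- admits no nontrivial automorphism, so there is no monodromy permuting the nodes or components, the node-smoothing parameters and the boundary strata are globally well defined, and the \'etale charts glue along open immersions of schemes. I would organize this using the reduction morphism $\bar\M_{0,S}\to\bar\M_{0,\mathcal K}$ from the projective scheme $\bar\M_{0,S}$, propagating the scheme property outward along the stratification by dual graph discussed in Section~$3$.

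I expect this last step to be the genuine obstacle. Because the spaces $\bar\M_{0,\mathcal K}$ need not be projective (\cite[Example 11.1]{MSvAX}), one cannot shortcut the argument by producing a global ample line bundle and invoking projectivity; one must instead argue directly that a smooth proper algebraic space is Zariski-locally a scheme. This is precisely the step that has no analogue in positive genus, where monodromy on the nodes obstructs the descent of the local charts, and it is the reason the conclusion is special to the genus $0$ case.
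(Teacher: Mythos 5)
The paper itself does not prove this statement: it is quoted verbatim from \cite[Proposition 11.5]{MSvAX}, with smoothness and properness of the stack already given by \cite[Theorem 4.18]{BB}. So your proposal must stand as a self-contained proof, and it splits cleanly in two. The rigidity half is correct: every component of a genus $0$ simplicially stable curve has at least three special points (your case analysis is right --- irreducible curves via the triparted hypothesis, rational tails via $\{s \mid p_s\in Z\}\notin\mathcal K$ together with the pointwise condition $\{s\mid p_s=x\}\in\mathcal K$, and unmarked two-node components via the $\Gm$-action contradicting finiteness); each leaf of the dual tree carries markings with distinct labels, a tree automorphism fixing all leaves is the identity, and then each component is fixed pointwise since it has at least three fixed special points. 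Hence the automorphism groups are trivial and $\bar\M_{0,\mathcal K}$ is a smooth proper algebraic space.

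The second half, however, is a plan rather than a proof, and the principle it hopes to invoke is false. There exist smooth proper algebraic spaces of dimension $\geq 3$ that are not schemes (for instance, the quotient of Hironaka's smooth proper non-projective threefold by a free involution), so one cannot ``argue directly that a smooth proper algebraic space is Zariski-locally a scheme.'' Your proposed substitute --- absence of monodromy on nodes and components, so that \'etale charts ``glue along open immersions'' --- has no content as stated: every algebraic space is by definition \'etale-locally a scheme, and the obstruction to being Zariski-locally a scheme is not measured by monodromy of the moduli problem. Propagating schemeness along $\pi_{\mathcal K}\colon \bar\M_{0,S}\to\bar\M_{0,\mathcal K}$ also does not work as stated, since the image of a scheme under a proper birational morphism of algebraic spaces need not be a scheme. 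Because $\bar\M_{0,\mathcal K}$ can fail to be projective (\cite[Example 11.1]{MSvAX}), there is no ample-bundle shortcut either; this step is exactly where the entire content of \cite[Proposition 11.5]{MSvAX} lies, and your proposal has a genuine gap at precisely the point you yourself flagged as ``the genuine obstacle.'' To repair it you would need an actual mechanism --- for example, exhibiting explicit quasi-projective Zariski charts, or reproducing the argument of \cite{MSvAX} --- rather than the heuristic appeal to the absence of monodromy.
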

By \cite[Example 11.1]{MSvAX}, it is not true that $\bar\M_{0,\mathcal K}$ is always projective, though the Hassett spaces are projective \cite[Theorem 2.1]{Hassett}. Lastly, we will make use of the following birational contraction from \cite[Theorem 6.3]{BB}.

\begin{thm}\label{morphism}
    We have birational morphism 
\[\pi_\mathcal K: \bar\M_{0,S}\to \bar\M_{0,\mathcal K}\]
which is the identity on $\M_{0,S}$ and contracts rational tals $Z$ iwhose set of markings is in $\mathcal K$. 
\end{thm}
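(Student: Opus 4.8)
The plan is to realize $\pi_{\mathcal K}$ as the classifying map of a family of $\mathcal K$-stable curves over $\bar\M_{0,S}$. Since $\bar\M_{0,\mathcal K}$ is a moduli space for $\mathcal K$-stable curves, it suffices to produce such a family whose restriction to the common open locus $\M_{0,S}$ is the tautological family. I would begin from the universal stable curve $\rho\colon \mathcal C\to\bar\M_{0,S}$ with its sections $\sigma_s$, and, fiber by fiber, contract precisely those rational tails $Z$ whose marking set $\{s : p_s\in Z\}$ belongs to $\mathcal K$, sending $Z$ to the node at which it is attached. Collapsing such a tail forces all of its markings to collide at that node; because the marking set lies in $\mathcal K$ this collision is permitted, and choosing the contracted tails to be maximal guarantees (using that $\mathcal K$ is downward closed) that no surviving rational tail carries a marking set in $\mathcal K$. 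Hence each geometric fiber of the contracted family is $\mathcal K$-stable.

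To carry this out in families I would first attempt to find a $\rho$-relatively semiample line bundle $\mathcal L$ on $\mathcal C$ whose fiberwise degree vanishes exactly on the $\mathcal K$-collapsible tails and is positive elsewhere, and then set $\mathcal C^{\mathcal K}:=\operatorname{Proj}_{\bar\M_{0,S}}\bigoplus_{m\ge 0}\rho_*\mathcal L^{\otimes m}$, so that the sections $\sigma_s$ descend to (possibly colliding) sections $\bar\sigma_s$. This mirrors Hassett's reduction morphisms, where $\mathcal L=\omega_\rho(\sum_s a_s\sigma_s)$ and collapsibility is governed by the numerical condition $\sum_{s\in T}a_s<1$. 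The subtlety is that $\mathcal K$ encodes combinatorial rather than numerical data, so no single line bundle of Hassett type need cut out the correct locus. Once a family $\mathcal C^{\mathcal K}\to\bar\M_{0,S}$ is in hand, I would verify $\mathcal K$-stability of each fiber as above, invoke the universal property to obtain $\pi_{\mathcal K}$, and check birationality: over $\M_{0,S}$ the fibers are smooth, hence have no rational tails and nothing is contracted, so $\pi_{\mathcal K}$ is the identity there, and since $\M_{0,S}$ is dense open in both smooth proper varieties $\pi_{\mathcal K}$ is birational.

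The main obstacle is exactly the construction of the contracted family. Because $\bar\M_{0,\mathcal K}$ need not be projective, there is in general no relatively semiample $\mathcal L$ realizing the contraction by a global $\operatorname{Proj}$, so the clean Hassett-style argument breaks down. I expect the honest proof to proceed \'etale-locally on $\bar\M_{0,S}$, or stratum by stratum along the dual-graph stratification discussed in Section 3, contracting each $\mathcal K$-collapsible tail locally (a standard blow-down of a family of smooth rational curves to its attaching section) and then checking that these local contractions glue into a global family. The gluing and descent, together with verifying that the collided sections remain well-behaved and that no further instability is introduced when tails are nested or meet a bridging component, is the technical heart of the argument; the downward-closure of $\mathcal K$ and the genus $0$ tree structure of the fibers are what I would use to keep this combinatorics under control.
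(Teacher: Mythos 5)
First, a point of order: the paper does not prove this statement at all. Theorem~\ref{morphism} is recalled from \cite[Theorem 6.3]{BB} and used as a black box, so there is no proof in the paper to compare yours against; your proposal has to be judged against what a complete proof of the contraction actually requires.

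Your fiberwise picture is correct: contracting the maximal rational tails whose marking set lies in $\mathcal K$ yields a $\mathcal K$-stable curve (downward closure of $\mathcal K$ gives the collision condition at the image points, maximality gives the no-collapsible-tails condition, and stability is preserved because the attaching node of each contracted tail survives as a special point), and the overall strategy --- build a family of $\mathcal K$-stable curves over $\bar\M_{0,S}$ restricting to the tautological family over $\M_{0,S}$, then use that $\bar\M_{0,\mathcal K}$ is a fine moduli space (automorphisms are trivial in genus $0$) --- is the right one. But what you have written is a plan, not a proof: the one substantive step, performing the contraction \emph{in families}, is precisely the step you defer as ``the technical heart,'' and it is genuinely nontrivial, since the collapsible tails are trees of rational curves that degenerate over the boundary of $\bar\M_{0,S}$, not a family of smooth rational curves one can simply blow down. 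Moreover, your stated obstruction is off target. Non-projectivity of $\bar\M_{0,\mathcal K}$ does not preclude a relatively semiample bundle over $\bar\M_{0,S}$: on any family of $\mathcal K$-stable curves the sheaf $\omega(\sum_s\sigma_s)$ is fiberwise ample (every component of a genus-$0$ $\mathcal K$-stable curve carries at least three special points), so the contracted family, once it exists, is relatively projective over $\bar\M_{0,S}$, and pulling back a relative ample bundle produces a relatively semiample bundle realizing the contraction a posteriori. What fails is only the specific Hassett ansatz $\omega_\rho(\sum_s a_s\sigma_s)$, because a general simplicial complex $\mathcal K$ is not cut out by weights. To close the gap you would need either to exhibit a suitable bundle directly on the universal curve $\rho\colon\mathcal C\to\bar\M_{0,S}$ --- a natural candidate is $\omega_\rho(\sum_s\sigma_s)$ twisted down by the divisors in $\mathcal C$ swept out by the collapsible tails, verified by a degree computation on each component type --- or to invoke general machinery for contracting subcurves in families of nodal (log smooth) curves, which is in effect what \cite{BB} does.
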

Because $\pi_{\mathcal K}$ is birational, one gets an injection
\[\pi^*_{\mathcal K}: \CH(\bar\M_{0,\mathcal K})\to \CH(\bar\M_{0,S}).\]
However, not much can be gleamed from this. For example, it is not clear that $\CH(\bar\M_{0,\mathcal K})$ is generated by the classes of divisors form this inclusion. 

\subsection{Higher Chow Groups}
The proof of Theorem~\ref{main} uses some basic properties of first higher Chow groups. These are useful because they give an extension of the usual localization exact sequence for Chow groups.
\begin{prop}
    Suppose $X$ is a scheme and $Z\subseteq X$ is a closed subscheme. If $U\subseteq X$ is the open complement of $Z$, there is a long exact sequence
    \[\dots\to \CH(Z,j)\xrightarrow{\iota_*} \CH(X,j)\xrightarrow{j^*}\CH(U,j)\xrightarrow{\partial_j}\CH(Z,j-1)\to\dots\]
    extending the localization exact sequence.
\end{prop}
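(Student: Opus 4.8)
The plan is to realize this as the long exact homology sequence associated to Bloch's cycle complex, so that the statement becomes a repackaging of Bloch's localization theorem. Recall that the higher Chow groups are defined by $\CH(X,j)=H_j(z_*(X,\bullet))$, where $z_*(X,n)$ is the free abelian group on integral closed subvarieties of $X\times\Delta^n$ of the appropriate dimension meeting every face $X\times F$ in the expected dimension, and the differential is the alternating sum of the face restriction maps.

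First I would set up the two maps at the level of complexes. The closed immersion $\iota\colon Z\hookrightarrow X$ is proper, so it induces a pushforward $\iota_*\colon z_*(Z,\bullet)\to z_*(X,\bullet)$; since $\iota$ is a closed immersion this map is injective and identifies $z_*(Z,\bullet)$ with the subcomplex of cycles on $X\times\Delta^\bullet$ supported on $Z\times\Delta^\bullet$. The open immersion $j\colon U\hookrightarrow X$ is flat, so restriction gives $j^*\colon z_*(X,\bullet)\to z_*(U,\bullet)$. The elementary observation is that a cycle on $X\times\Delta^n$ restricts to zero on $U\times\Delta^n$ exactly when it is supported on $Z\times\Delta^n$; hence $\ker(j^*)=\im(\iota_*)$, and we obtain a sequence of complexes
\[ 0\to z_*(Z,\bullet)\xrightarrow{\iota_*} z_*(X,\bullet)\xrightarrow{j^*} z_*(U,\bullet) \]
that is exact on the left and in the middle.

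The crux, and the step I expect to be the genuine obstacle, is the surjectivity of $j^*$ up to quasi-isomorphism: although taking Zariski closures of cycles on $U$ need not respect proper intersection with the faces, Bloch's moving lemma guarantees that the inclusion $\im(j^*)\hookrightarrow z_*(U,\bullet)$ is a quasi-isomorphism, equivalently that the cokernel complex of $j^*$ is acyclic. This is precisely the content of Bloch's localization theorem; I would not reprove it but cite it, as it is the one deep input, and the reason higher Chow groups behave better in this respect than ordinary Chow groups.

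Granting this, the argument concludes formally. The short exact sequence of complexes
\[ 0\to z_*(Z,\bullet)\xrightarrow{\iota_*} z_*(X,\bullet)\xrightarrow{j^*}\im(j^*)\to 0 \]
yields a long exact sequence in homology, and composing with the quasi-isomorphism $\im(j^*)\simeq z_*(U,\bullet)$ replaces $H_j(\im(j^*))$ by $\CH(U,j)$ throughout. Reading off the connecting homomorphism as $\partial_j\colon\CH(U,j)\to\CH(Z,j-1)$ gives exactly the asserted sequence, and truncating at $j=0$ recovers the classical localization sequence for Chow groups, so it is genuinely an extension of it.
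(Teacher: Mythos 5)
Your proposal is correct, and it is the standard derivation. For context: the paper itself offers no proof of this proposition at all --- it is recalled as known background (this is Bloch's localization theorem for higher Chow groups), and the only parts actually used later are the segment $\CH(U,1)\xrightarrow{\partial_1}\CH(Z)\to\CH(X)\to\CH(U)\to 0$ and the module structure of Lemma~\ref{Module}. Your outline matches the argument one would find in the literature: closed immersions give a proper pushforward of cycle complexes, open immersions give flat restriction, $\ker(j^*)=\im(\iota_*)$ is elementary, and the entire content is the quasi-isomorphism $\im(j^*)\hookrightarrow z_*(U,\bullet)$, which you rightly attribute to Bloch's moving/localization theorem rather than attempting to reprove. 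One small caution: as stated, ``scheme'' is too generous --- Bloch's original theorem covers quasi-projective schemes over a field, and Levine's extension handles separated finite-type schemes over a field; some hypothesis of this kind is needed, and it is harmless here since the paper works only with varieties over a field. Also note that the clean statement requires the dimension-graded (rather than codimension-graded) cycle groups when $Z$ is not of pure codimension, which is consistent with the paper's convention of using total Chow groups summed over all dimensions.
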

We will only need to consider the following portion of this long exact sequence
\[\CH(U,1)\xrightarrow{\partial_1} \CH(Z)\to\CH(X)\to \CH(U)\to 0.\]
We also use the following fact, which is \cite[Lemma 2.5]{Me1}.
\begin{lemma}\label{Module}
Suppose $S$ is a smooth, irreducible variety. Then for any morphism $f: X\to S$, we get a $\CH(S)$-module structure on $\CH(X,j)$, compatible with pushforwards, pullbacks, and connecting homomorphisms in the localization exact sequence. 
\end{lemma}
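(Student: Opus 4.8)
The plan is to realize the action of $\CH(S)$ through the external product on higher Chow groups followed by a refined Gysin pullback along the graph of $f$, using the smoothness of $S$ in an essential way. Recall that Bloch's higher Chow groups carry external products
\[\times\colon \CH(X,j)\otimes \CH(Y,i)\to \CH(X\times Y,i+j),\]
functorial for flat pullback and proper pushforward. Taking $Y=S$ and $i=0$, any $\alpha\in\CH(S)$ and $\beta\in\CH(X,j)$ produce a class $\beta\times\alpha\in\CH(X\times S,j)$. Since $S$ is smooth, the graph $\gamma_f\colon X\to X\times S$ is a regular embedding: it is the pullback of the diagonal $\Delta_S\hookrightarrow S\times S$ along $f\times\id_S$, and $\Delta_S$ is regularly embedded precisely because $S$ is smooth. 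Deformation to the normal cone, which extends to Bloch's higher Chow groups, then yields a refined Gysin homomorphism $\gamma_f^{!}\colon \CH(X\times S,j)\to \CH(X,j)$, and I would define the action by $\alpha\cdot\beta := \gamma_f^{!}(\beta\times\alpha)$.

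Next I would verify the module axioms together with the compatibility with pushforward and pullback. Bilinearity is immediate, and the unit axiom $[S]\cdot\beta=\beta$ follows because $\gamma_f^{!}$ applied to $\beta\times[S]$ recovers $\beta$. Associativity $(\alpha\alpha')\cdot\beta=\alpha\cdot(\alpha'\cdot\beta)$ reduces to the commutativity of refined Gysin maps with one another and with external products, after identifying the iterated graphs. The compatibility with proper pushforward $g\colon X\to Y$ over $S$ is the projection formula for $\gamma^{!}$, and compatibility with flat (or l.c.i.) pullback is base change for refined Gysin homomorphisms; both are instances of Fulton's functoriality of refined Gysin maps, which Bloch's theory inherits at the level of the cycle complex. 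In the localization sequence this already settles the two maps $\iota_*$ and $j^*$, since these are induced by the proper closed immersion and the flat open immersion, respectively, and hence are maps of $\CH(S)$-modules.

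The main obstacle is the remaining compatibility, namely with the connecting homomorphism $\partial$: unlike $\iota_*$ and $j^*$, it is not induced by any morphism, so it cannot be treated by naive functoriality of the external product. The clean route is to perform the entire construction at the level of Bloch's cycle complexes rather than only on their homology. The localization sequence arises from a distinguished triangle relating the cycle complexes $z(Z,\bullet)$, $z(X,\bullet)$, and $z(U,\bullet)$, and if the operation $\beta\mapsto\alpha\cdot\beta$ is induced by an honest chain map on these complexes, then $\partial$ is automatically a morphism of $\CH(S)$-modules and the whole long exact sequence becomes one of $\CH(S)$-modules. Realizing $\times$ and $\gamma_f^{!}$ on the nose requires moving cycles into good position compatibly over $Z$, $X$, and $U$ simultaneously; I expect this simultaneous transversality to be the technical heart of the argument, and I would lean on Levine's chain-level moving lemmas for the cycle complex to carry it out.
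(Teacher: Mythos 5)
First, a caveat on the comparison itself: this paper contains no proof of the lemma --- it is imported wholesale as \cite[Lemma 2.5]{Me1} --- so what follows assesses your argument on its own terms rather than against an internal proof. Your construction of the action is the standard one and is correct in outline: form the external product $\beta\times\alpha\in\CH(X\times S,j)$ and apply the refined Gysin map along the graph $\gamma_f\colon X\to X\times S$, which is a regular embedding precisely because $S$ is smooth. The module axioms and the $\CH(S)$-linearity of $\iota_*$ and $j^*$ then follow from standard functorialities, as you say; note also that external product with a fixed class of simplicial degree zero is already an honest chain map on Bloch's complexes (no shuffle, no moving), so that ingredient is unproblematic.

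The genuine gap is in your plan for the one compatibility you correctly single out as the crux, that with $\partial_j$. You propose to realize $\gamma_f^{!}$ as a chain map by moving cycles on $X\times S$, $Z\times S$, $U\times S$ into good position relative to the graphs, citing Levine's moving lemmas. Those moving lemmas (like Bloch's) are theorems about \emph{smooth} quasi-projective varieties, whereas here $X$, $Z$, $U$ are arbitrary --- and in the application in this paper they are genuinely singular, e.g.\ $Z=\partial\bar\M_{0,\mathcal K}$ is a union of boundary divisors. On a singular variety one cannot in general move a cycle within its rational equivalence class into proper position with respect to a prescribed closed subset; this failure is exactly why refined Gysin maps are constructed by deformation to the normal cone rather than by moving, so the ``technical heart'' of your argument is not available in the generality the lemma requires. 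The repair is to stay with the deformation construction you already invoked and never pass to a chain-level pullback: in the higher Chow setting $\gamma_f^{!}$ is the composite of the specialization map to the normal bundle $f^*T_S$ (itself defined as external multiplication by the unit in $\CH(\Gm,1)$ followed by the connecting homomorphism of the localization sequence of the deformation space) with the inverse of the homotopy-invariance isomorphism. Each of these ingredients --- external product with a fixed class, flat pullback, proper pushforward, and connecting maps of nested localization sequences --- commutes, up to the standard sign for iterated boundaries, with the connecting homomorphism of any other localization sequence, so $\partial_j$-linearity follows formally with no moving lemma at all. Alternatively one can argue motivically, where the localization triangle of Borel--Moore motives over $S$ is a triangle of modules and all compatibilities are automatic; given that \cite{Me1} is organized around the motivic K\"unneth property, that is presumably the route taken in the cited source.
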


\subsection{Motivic K\"unneth Property}
We recall the basic properties of the motivic K\"unneth property (MKP) described in \cite[Section 2.4]{Me1}. This will be useful because, by \cite[Proposition 7.2]{Me1}, $\M_{0,S}$ has the MKP. 

\begin{prop}\label{MKPprops}
    \mbox{}
    \begin{enumerate}
        \item Suppose $X$ is a scheme with closed substack $Z$ and set $U:=X\setminus Z$. If two out of three of $X,U$, and $Z$ have the MKP, then so does the third.
        \item If $X$ and $X'$ have the MKP, so does $X\times X'$.
        \item If $X$ has the MKP, then $X$ has the Chow-K\"unneth property, i.e. for all schemes $Y$, the map
        \[\CH(X)\otimes\CH(Y)\to \CH(X\times Y)\]
        is an isomorphism.
    \end{enumerate}
\end{prop}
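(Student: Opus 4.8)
The plan is to deduce all three statements from the behavior of the Borel--Moore motive $M^c(X)$ in Voevodsky's triangulated category of motives $DM$, using the reformulation that $X$ has the MKP precisely when $M^c(X)$ lies in the localizing triangulated subcategory generated by the Tate twists $\Z(n)[2n]$ (equivalently, $M^c(X)$ is ``Tate''). The two inputs I would lean on throughout are the localization (Gysin) distinguished triangle attached to a closed--open decomposition and the compatibility of $M^c$ with products; once the MKP is phrased motivically, all three assertions become formal properties of a triangulated tensor subcategory.

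For (1), I would invoke the localization triangle
\[ M^c(Z)\to M^c(X)\to M^c(U)\to M^c(Z)[1], \]
valid for any closed subscheme $Z\subseteq X$ with open complement $U$. A triangulated subcategory is closed under shifts and under forming the third vertex of a distinguished triangle, so if two of $M^c(Z),M^c(X),M^c(U)$ are Tate then so is the third; translating back, if two of $X,U,Z$ have the MKP then so does the remaining one. This is the crucial step, and I would stress that the apparently more elementary route does \emph{not} work directly: one would like to tensor the localization long exact sequence of higher Chow groups from the proposition above with $\CH(Y,\bullet)$ and compare it, via the five lemma, to the localization sequence of the triple $X\times Y,\,Z\times Y,\,U\times Y$, but $\CH(Y,\bullet)$ need not be flat, so the tensored sequence fails to be exact and acquires $\Tor$ terms. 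Passing to the distinguished triangle of motives is exactly what circumvents this obstruction, which is why I expect (1) to be the only part with genuine content.

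For (2), I would combine the Künneth isomorphism $M^c(X\times X')\cong M^c(X)\otimes M^c(X')$ with the closure of the Tate subcategory under tensor product: a tensor product of objects assembled from the $\Z(n)[2n]$ is again of that form, so $M^c(X)$ and $M^c(X')$ Tate forces $M^c(X\times X')$ Tate. For (3), writing $M^c(X)$ as a (possibly infinite) sum $\bigoplus_i \Z(n_i)[2n_i]$, the higher Chow groups $\CH(X\times Y,j)=\mathrm{Hom}_{DM}\big(\Z(\ast)[\ast],\,M^c(X)\otimes M^c(Y)\big)$ split as a direct sum over $i$, each summand contributing a twisted copy of $\CH(Y,\bullet)$; reassembling the twists yields $\CH(X,\bullet)\otimes\CH(Y,\bullet)$, and restricting to $j=0$ gives the stated Chow--Künneth isomorphism $\CH(X)\otimes\CH(Y)\xrightarrow{\sim}\CH(X\times Y)$. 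Thus parts (2) and (3) are purely formal once the motivic reformulation of the MKP in (1) is in hand.
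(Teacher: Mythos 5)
You should first be aware that this paper contains no proof of Proposition~\ref{MKPprops} at all: it is recalled verbatim from \cite[Section 2.4]{Me1}, so your proposal has to be judged against the approach of that cited source. Judged on its own terms, there is a genuine gap, and it sits exactly where the content of the proposition lies, namely in part (3). You take as your working definition of the MKP that $M^c(X)$ lies in the localizing subcategory of $DM$ generated by the objects $\Z(n)[2n]$, and you never justify that this is equivalent to the Künneth-type property being axiomatized (the direction you would need for (3) is essentially the statement of (3), so this reformulation is circular as it stands). Worse, the key step of your argument for (3) --- ``writing $M^c(X)$ as a (possibly infinite) sum $\bigoplus_i \Z(n_i)[2n_i]$'' --- is false: a localizing subcategory is by definition closed under shifts and cones, so the subcategory generated by the $\Z(n)[2n]$ contains every $\Z(n)[m]$ and all iterated non-split extensions of these, and a general object of it admits no such splitting. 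This is not a corner case for the present paper: the input actually used here is that $\M_{0,S}$ has the MKP (\cite[Proposition 7.2]{Me1}, quoted before Proposition~\ref{MKPprops}), and already $M^c(\M_{0,4})\cong \Z(1)[2]\oplus \Z[1]^{\oplus 2}$ has odd-degree summands, so your argument for (3) does not apply to the very spaces this paper needs. If you instead shrink the definition to ``split sums of $\Z(n_i)[2n_i]$'' so that (3) goes through, then (1) breaks: the third vertex of a triangle on split pure Tate objects need not be split pure Tate, as the same example $\{0,1,\infty\}\subset \mathbb{P}^1$ with complement $\M_{0,4}$ shows.

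The deeper issue is that in $DM$ the isomorphism $M^c(X)\otimes M^c(Y)\cong M^c(X\times Y)$ holds for \emph{all} finite type $X,Y$; Künneth properties are statements about computing $\operatorname{Hom}$ groups (i.e.\ higher Chow groups) of the product, and membership of $M^c(X)$ in the Tate subcategory does not formally yield the integral isomorphism $\CH(X)\otimes\CH(Y)\to\CH(X\times Y)$: propagating it along a Postnikov tower of $M^c(X)$ forces you to tensor long exact sequences with $\CH(Y,\bullet)$, which is precisely the non-flatness/$\Tor$ obstruction you correctly flag in your discussion of (1). The way this is handled in the cited source (and in the related arguments of Totaro and of Bae--Schmitt) is to build the derived structure into the definition itself: the MKP is a statement that the exterior product map is an isomorphism at the level of cycle complexes/higher Chow groups (a derived tensor product statement, or equivalently an induction that proves surjectivity of the Künneth maps in all higher degrees alongside the isomorphism in degree $0$). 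With that definition, (1) follows from Bloch's localization theorem for cycle complexes plus the five lemma, (2) is formal associativity, and (3) follows by taking $H_0$ and using that cycle complexes are connective, so $H_0$ of a derived tensor product is the tensor product of the $H_0$'s --- no splitting is ever needed. Your instinct that one must pass to a triangulated or derived setting to evade the $\Tor$ terms is exactly right, but the object retained must remember the higher Chow groups of both factors, not merely the isomorphism class of the motive of $X$.
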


\begin{rmk}
    The use of the MKP in this paper can be replaced by the easier-to-define CKgP. See \cite[Remark 2.26]{Me1}
\end{rmk}

\section{Boundary Stratification}
Let $g\geq 0$ and $\mathcal K$ be a simplicial complex on a finite set $S$ which is at least triparted if $g=0$. We define the boundary strata on $\bar\M_{g,\mathcal K}$ analogously to the boundary strata on $\bar\M_{g,n}$ from the appendix of \cite{GP} (see also \cite[Section 2]{BS1}). 

\begin{definition}
    A $\mathcal K$-stable graph $\Gamma$ consists of 
    \begin{itemize}
        \item a vertex set $V(\Gamma)$, with a function $g:V(\Gamma)\to \mathbb Z_{\geq 0}$ assigning a genus
        \item a set of half edges $H(\Gamma)$, each attached to a vertex according to a function $f:H(\Gamma)\to V(\Gamma)$,
        \item an involution $\iota$ on $H(\Gamma)$, whose size $2$ orbits on $H$ give the edges $E(\Gamma)$, whose fixed points are called legs, and
        \item a surjective function $\ell: S\to H(\Gamma)^{\iota}$ from the set of labels $S$ to the fixed points of $\iota$,
    \end{itemize}
    such that
    \begin{enumerate}
        \item the graph $(V(\Gamma),E(\Gamma))$ is connected, 
        \item $\{s\in S| \ell(s)=t\}\in \mathcal K$ for each $t\in H(\Gamma)^\iota$, 
        \item $\{s\in S| \ell(s)\in H(v)\}\notin \mathcal K$ for each leaf $v\in V(\Gamma)$, where $H(v):=f^{-1}(v)$ is the set of half edges at $v$, and 
        \item $2g(v)-2+n(v)\geq 0$ for all $v\in V$, where $n(v):=\#H(v)$. 
    \end{enumerate}
\end{definition}
The differences between this definition and the usual definition for $\bar\M_{g,n}$ are that the function $\ell$ need not be injective and that we need the extra conditions (2) and (3). Thus, we may think of a $\mathcal K$-stable graphs as usual stable graph where certain sets of markings can live on the same leg.

Given a geometric point of $\bar\M_{g,\mathcal K}$, one can obtain a $\mathcal K$-stable graph whose vertices correspond to the irreducible components, edges correspond to the intersections of components, and legs correspond to the set of points which are marked.
\begin{definition}
    Given a $\mathcal K$-stable graph $\Gamma$ and a vertex $v\in V$, let $\mathcal K(v)$ be the simplicial complex on the set $H(v)$ containing singletons and subsets $A\subseteq H^{\iota}\subseteq H(v)$ such that $\{s\in S|\ell(s)\in A\}\in \mathcal K$.
\end{definition} 
\begin{lemma}
    Let $\Gamma$ be a $\mathcal K$-stable graph and $v\in V(\Gamma)$ with $g(v)=0$. Then $\mathcal K(v)$ is at least triparted.
\end{lemma}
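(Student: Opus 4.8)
The plan is to show directly that $H(v)$ admits no partition into at most two faces of $\mathcal K(v)$, which is exactly the statement that $\mathcal K(v)$ is at least triparted. The one structural fact I would isolate first is that every face of $\mathcal K(v)$ containing an edge half-edge (an element of $H(v)\setminus H(\Gamma)^\iota$) must be a singleton, since by definition the only non-singleton faces of $\mathcal K(v)$ are subsets $A$ of the legs $H(v)\cap H(\Gamma)^\iota$ with $\ell^{-1}(A)\in\mathcal K$. Consequently, in any partition of $H(v)$ into faces each of the $d:=\#\bigl(H(v)\setminus H(\Gamma)^\iota\bigr)$ edge half-edges occupies its own block, and the remaining blocks partition the legs into sets $A$ with $\ell^{-1}(A)\in\mathcal K$; the total number of blocks is $d$ plus the number of leg-blocks.

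I would then finish by a short case analysis on $d$. If $d\ge 3$ there are already at least three blocks. If $d=0$ then $v$ has no incident edge, so by connectedness it is the only vertex of $\Gamma$ and the total genus is $0$; a partition of $H(v)=H(\Gamma)^\iota$ into leg-faces pulls back along the surjection $\ell$ to a partition of $S$ into elements of $\mathcal K$, and the hypothesis that $\mathcal K$ is at least triparted forces at least three blocks. If $d=1$ then $v$ is a leaf, so condition $(3)$ gives $N_v=\{s\in S\mid \ell(s)\in H(v)\}\notin\mathcal K$; thus the legs at $v$ cannot form a single face and require at least two leg-blocks, for a total of at least $1+2=3$. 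Finally, if $d=2$ the two edge half-edges already use two singleton blocks, and it remains to produce a third block, i.e. a leg.

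The crux, and the step I expect to be the real obstacle, is precisely this $d=2$ case: conditions $(1)$--$(4)$ together with the global at-least-triparted hypothesis do not on their own rule out a genus-$0$, two-valent, unmarked vertex, which satisfies $2g(v)-2+n(v)=0$ and is consistent with every combinatorial axiom. What excludes it is the finite-automorphism requirement of Definition~\ref{def}: a rational component carrying exactly two special points and no markings admits a $\Gm$ of automorphisms, so no such component occurs in $\bar\M_{0,\mathcal K}$; equivalently, one reads the stability inequality as $2g(v)-2+n(v)>0$ when $g(v)=0$. With this in hand, a genus-$0$ vertex with $d=2$ must carry a leg, yielding the third block and completing every case. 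I would therefore present the argument so that the three genuinely combinatorial cases ($d=0,1,\ge 3$) rest on the at-least-triparted hypothesis and on condition $(3)$, while the remaining case is dispatched by stability.
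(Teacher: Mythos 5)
Your proof is correct and follows essentially the same route as the paper's: a case analysis on the number of edges at $v$, using that any face of $\mathcal K(v)$ containing an edge half-edge is a singleton, condition (3) for the leaf case, and pullback of partitions along $\ell$ for the isolated-vertex case. Your flagging of the $d=2$ case is apt but does not change the approach: the paper's proof likewise invokes stability to produce a third half edge there, implicitly reading condition (4) as the strict inequality $2g(v)-2+n(v)>0$ (which is what the finite-automorphism requirement of Definition~\ref{def} actually forces on dual graphs of simplicially stable curves), so the $\geq$ in the stated condition (4) is best regarded as a typo rather than an obstacle to the argument.
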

\begin{proof}
Suppose there are at least two (full) edges coming out of $v$. By stability, there must be a third half edge at $v$, either corresponding to a leg or another full edge. The only subsets of $\mathcal K(v)$ containing each of the half edges at $v$ corresponding to the two full edges are the singletons, so any partition of $H(v)$ must have at least three parts. 

Next, suppose there is only one full edge coming out of $v$. Again, the only element of $\mathcal K(v)$ containing the half edge corresponding to this full edge is the singleton. If $\mathcal K(v)$ were not triparted, we would have that the set of all other half edges at $v$ is an element of $\mathcal K(v)$. By definition, this says that the set of $s\in S$ so that $\ell(s)\in H(v)$ is an element of $\mathcal K$, but this contradicts condition (3) of the definition of a $\mathcal K$-stable graph.  

Finally, suppose that there are zero full edges coming out of $v$. Because $\mathcal K$-stable graphs are connected, $v$ is the only vertex in $V(\Gamma)$. As $g(v)=0$, we also have $g=0$, and so $\mathcal K$ is triparted. Any partition of $H(v)$ into $A_1,\dots,A_n\in \mathcal K(v)$ induces a partition of $\mathcal K$ by the sets
\[\widetilde A_i:=\{s\in S| \ell(s)\in A_i\}.\]
By definition of $\mathcal K(v)$, the sets $\widetilde A_i$ are in $\mathcal K$. Because $\mathcal K$ is triparted, so is $\mathcal K(v)$.
\end{proof}

This lemma allows us to make the following definition.
\begin{definition}   
    Let $\Gamma$ be a $\mathcal K$-stable graph. Define
    \[\bar\M_\Gamma:=\prod_{v\in V(\Gamma)}\bar\M_{g(v),\mathcal K(v)}\]
    and 
    \[\M_\Gamma:=\prod_{v\in V(\Gamma)}\M_{g(v),\mathcal K(v)}\subseteq \bar\M_\Gamma.\]
    We have a morphism
    \[\xi_\Gamma: \bar\M_\Gamma \to \bar\M_{g,\mathcal K}\]
    defined by gluing the curves along the marked points corresponding to edges. Let $\bar\M^\Gamma:=\xi_\Gamma(\bar\M_\Gamma)$. 
    
    Define the partial order $\leq$ on $\mathcal K$-stable graphs by declaring $\Gamma'\leq \Gamma$ if $\Gamma'$ can be obtained by replacing vertices $v\in V(\Gamma)$ by $\mathcal K(v)$-stable graphs $\Gamma'_v$. A choice of $\Gamma'_v$ is called a \emph{$\Gamma$-structure on $\Gamma'$}. Finally, set
    \[\M^\Gamma:=\bar\M^\Gamma\setminus \left(\bigcup_{\Gamma': \Gamma'\leq \Gamma} \bar\M^{\Gamma'} \right).\]
\end{definition}
\noindent Note that we leave the $\mathcal K$ implicit in our notation for the above notions.
\begin{lemma}\label{leq}
    For $\mathcal K$-stable graphs $\Gamma,\Gamma'$, $\Gamma'\leq \Gamma$ if and only if $\bar\M^{\Gamma'}\subseteq \bar\M^\Gamma$.
\end{lemma}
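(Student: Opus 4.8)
The plan is to prove the two implications separately; in both directions the engine is the compatibility of the gluing morphisms $\xi_\Gamma$ with the vertex-replacement operation that defines $\leq$, so the crux is a careful bookkeeping of dual graphs under gluing rather than any hard geometry.

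For the implication $\Gamma'\leq\Gamma \Rightarrow \bar\M^{\Gamma'}\subseteq\bar\M^\Gamma$, suppose $\Gamma'$ is obtained from a choice of $\mathcal K(v)$-stable graphs $\Gamma'_v$, one for each $v\in V(\Gamma)$. First I would group the vertices of $\Gamma'$ according to which vertex of $\Gamma$ they replace; this yields a product decomposition $\bar\M_{\Gamma'}=\prod_{v}\bar\M_{\Gamma'_v}$ together with gluing morphisms $\xi_{\Gamma'_v}\colon \bar\M_{\Gamma'_v}\to\bar\M_{g(v),\mathcal K(v)}$. The essential point is the associativity of gluing: gluing the chosen curves first along the edges internal to each $\Gamma'_v$ and then along the edges of $\Gamma$ produces the same marked nodal curve as gluing all at once along the edges of $\Gamma'$, i.e. $\xi_{\Gamma'}=\xi_\Gamma\circ\prod_v\xi_{\Gamma'_v}$. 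Passing to images then gives $\bar\M^{\Gamma'}=\xi_{\Gamma'}(\bar\M_{\Gamma'})\subseteq\xi_\Gamma(\bar\M_\Gamma)=\bar\M^\Gamma$.

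For the converse, assume $\bar\M^{\Gamma'}\subseteq\bar\M^\Gamma$. Using the preceding lemma to guarantee that each factor $\bar\M_{g(w),\mathcal K(w)}$ is a nonempty moduli space, I would construct a geometric point $x$ with dual graph exactly $\Gamma'$: for each vertex $w$ of $\Gamma'$ choose a smooth genus $g(w)$ curve whose markings realize the collision pattern prescribed by $\ell$ at $w$ and are otherwise distinct, and glue these along the edges of $\Gamma'$. By construction $x=\xi_{\Gamma'}\big((C_w)_w\big)\in\bar\M^{\Gamma'}\subseteq\bar\M^\Gamma=\xi_\Gamma(\bar\M_\Gamma)$, so I may write $x=\xi_\Gamma(y)$ with $y=(C_v)_{v\in V(\Gamma)}\in\bar\M_\Gamma$. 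I would then read the dual graph of $x$ off this second presentation: the components, nodes and legs of $\xi_\Gamma(y)$ are exactly those of the $C_v$ together with the new nodes produced by the edges of $\Gamma$, so the dual graph of $x$ is $\Gamma$ with each vertex $v$ replaced by the dual graph $\Gamma'_v$ of $C_v$, which is a $\mathcal K(v)$-stable graph. Since this dual graph equals $\Gamma'$, the collection $\{\Gamma'_v\}$ is a $\Gamma$-structure on $\Gamma'$, that is, $\Gamma'\leq\Gamma$.

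The step I expect to be the main obstacle is the dual-graph bookkeeping common to both directions: verifying that the dual graph of a glued curve $\xi_\Gamma(y)$ is precisely $\Gamma$ with each vertex replaced by the dual graph of the corresponding factor, and the mirror-image associativity identity $\xi_{\Gamma'}=\xi_\Gamma\circ\prod_v\xi_{\Gamma'_v}$. Both are geometrically transparent but require checking that gluing introduces no unexpected incidences; the key observation making this work is that a half-edge of $\Gamma$ corresponding to an edge appears only in the singletons of each $\mathcal K(v)$, so the points glued together are always distinct smooth unmarked points and never collide with markings.
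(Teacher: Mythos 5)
Your proposal is correct and follows essentially the same route as the paper's proof: the forward implication via the factorization $\xi_{\Gamma'}=\xi_\Gamma\circ\prod_v\xi_{\Gamma'_v}$ of gluing maps, and the converse by producing a geometric point whose dual graph is exactly $\Gamma'$ and observing that every point of $\bar\M^\Gamma$ has dual graph obtained by replacing vertices of $\Gamma$ with $\mathcal K(v)$-stable graphs. Your write-up merely makes explicit the dual-graph bookkeeping and the nonemptiness of $\M_{\Gamma'}$ that the paper leaves implicit.
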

\begin{proof}
    Note that the $\mathcal K$-stable graph of every geometric point in $\bar\M^\Gamma$ can be obtained by replacing vertices in $\Gamma$ by $\mathcal K(v)$-graphs. Then, if $\bar\M^{\Gamma'}\leq \bar\M^\Gamma$, because some points in $\bar\M^{\Gamma'}$ have $\Gamma'$ as their stable graph, $\Gamma'\leq \Gamma$. Conversely, if $\Gamma'\leq \Gamma$, the map $\xi_{\Gamma'}:\bar\M_{\Gamma'}\to \bar\M^{\Gamma'}\subseteq \bar\M_{g,\mathcal K}$ factors through $\xi_{\Gamma}:\bar\M_{\Gamma}\to \bar\M^\Gamma\subseteq\bar\M_{g,\mathcal K}$ by gluing together the curves in the graphs $\Gamma'_v$ for $v\in V(\Gamma).$ This gives $\bar\M^{\Gamma'}\subseteq \bar\M^\Gamma$.
\end{proof}
\begin{prop}\label{strata}
The collection of locally closed subschemes $\{\M^\Gamma\}_{\Gamma}$ of $\bar\M_{g,\mathcal K}$ stratify $\bar\M_{g,\mathcal K}$ and $\xi_\Gamma(\M_\Gamma)=\M^\Gamma$.    
\end{prop}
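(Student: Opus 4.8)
The plan is to reduce everything to the combinatorics of the dual graph. First I would record the assignment sending a geometric point $x=(C,\{p_s\})$ of $\bar\M_{g,\mathcal K}$ to its $\mathcal K$-stable graph $\Gamma_x$: vertices are the irreducible components of $C$, edges are the nodes, and the labelling $\ell$ sends $s$ to the leg at the point $p_s$, so that a collision $p_s=p_t$ is recorded by $\ell(s)=\ell(t)$. That conditions (2) and (3) hold for $\Gamma_x$ is exactly the statement that $x$ lies in $\bar\M_{g,\mathcal K}$. Since each point has a well-defined $\Gamma_x$ up to isomorphism, the assignment $x\mapsto[\Gamma_x]$ already partitions $\bar\M_{g,\mathcal K}$ into the disjoint loci $\{x:\Gamma_x\cong\Gamma\}$, and these manifestly cover the space. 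The whole proposition then follows once I show $\M^\Gamma=\{x:\Gamma_x\cong\Gamma\}=\xi_\Gamma(\M_\Gamma)$.

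The key step is the characterization $x\in\bar\M^\Gamma\iff\Gamma_x\leq\Gamma$. For the forward implication, a point of $\bar\M^\Gamma=\xi_\Gamma(\bar\M_\Gamma)$ is a gluing of curves indexed by $V(\Gamma)$, and its dual graph is obtained from $\Gamma$ by replacing each vertex $v$ with the dual graph of the curve sitting over it; this is precisely the relation $\Gamma_x\leq\Gamma$. For the converse, note that any $x$ is recovered by gluing the smooth marked curves obtained by normalizing $C$ at its nodes, so $x\in\xi_{\Gamma_x}(\M_{\Gamma_x})\subseteq\bar\M^{\Gamma_x}$; if $\Gamma_x\leq\Gamma$ then Lemma~\ref{leq} gives $\bar\M^{\Gamma_x}\subseteq\bar\M^\Gamma$, hence $x\in\bar\M^\Gamma$. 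Feeding this into the definition of $\M^\Gamma$ as $\bar\M^\Gamma$ with the deeper strata $\bar\M^{\Gamma'}$ (for $\Gamma'<\Gamma$) removed, antisymmetry of $\leq$ yields $\M^\Gamma=\{x:\Gamma_x\cong\Gamma\}$. The same two observations give $\xi_\Gamma(\M_\Gamma)=\M^\Gamma$: gluing a tuple in $\M_\Gamma=\prod_{v}\M_{g(v),\mathcal K(v)}$ of smooth curves produces a curve whose only singularities are the gluing nodes, so its dual graph is exactly $\Gamma$; conversely the normalize-and-reglue description realizes every $x$ with $\Gamma_x\cong\Gamma$ in the image.

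It remains to supply the topological content. Because $\bar\M_\Gamma$ is a product of proper spaces and $\bar\M_{g,\mathcal K}$ is separated, $\xi_\Gamma$ is proper, so $\bar\M^\Gamma$ is closed; deleting from it the closed sets $\bar\M^{\Gamma'}$ shows $\M^\Gamma$ is locally closed, and I would equip it with the induced reduced structure. For the frontier condition, each factor $\M_{g(v),\mathcal K(v)}$ is dense in $\bar\M_{g(v),\mathcal K(v)}$, so $\M_\Gamma$ is dense in $\bar\M_\Gamma$ and therefore $\M^\Gamma=\xi_\Gamma(\M_\Gamma)$ is dense in $\bar\M^\Gamma=\xi_\Gamma(\bar\M_\Gamma)$; since the latter is closed, $\overline{\M^\Gamma}=\bar\M^\Gamma$. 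Combined with the characterization of the previous paragraph, $\overline{\M^\Gamma}=\bigsqcup_{\Gamma'\leq\Gamma}\M^{\Gamma'}$ is a union of strata, which is the remaining axiom.

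I expect the main obstacle to be the second paragraph: matching the combinatorial order $\leq$ on $\mathcal K$-stable graphs to the geometric specialization order. Concretely one must verify that the dual graph of a glued curve really is the refinement of $\Gamma$ by the per-vertex dual graphs, and that the bookkeeping of $\ell$ under gluing is consistent — this is where conditions (2) and (3) in the definition of a $\mathcal K$-stable graph, ensuring that collisions on the pieces match $\mathcal K$-stability of the glued curve, must be used — so that Lemma~\ref{leq} can be applied to pass between the two pictures.
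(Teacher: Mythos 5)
Your proposal is correct and follows essentially the same route as the paper's own proof: both identify $\M^\Gamma$ with the locus of geometric points whose $\mathcal K$-stable graph is exactly $\Gamma$, using the gluing/refinement dictionary (a point of $\bar\M^\Gamma$ has graph $\leq\Gamma$, and conversely via normalize-and-reglue plus Lemma~\ref{leq}) together with antisymmetry of $\leq$, and then deduce disjointness, covering, and $\xi_\Gamma(\M_\Gamma)=\M^\Gamma$ from uniqueness of the dual graph. Your third paragraph (properness of $\xi_\Gamma$, local closedness, and the frontier condition) makes explicit some topological points the paper leaves implicit, but the core argument is the same.
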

\begin{proof}
    We first claim that the geometric points of $\M^\Gamma$ are exactly those with $\mathcal K$-stable graph $\Gamma$. If a geometric point $(C,\{p_s:s\in S\})$ has $\mathcal K$-stable graph $\Gamma$, then, by construction, $(C,\{p_s:s\in S\})$ can be glued together from curves represented by the vertices of $\Gamma$ along the edges. Thus, $(C,\{p_s:s\in S\})$ is in $\xi_\Gamma(\bar\M_\Gamma)=\bar\M^\Gamma$. If $(C,\{p_s:s\in S\})\in \bar\M^{\Gamma'}\subseteq\bar\M^\Gamma$, we have that $\Gamma\leq \Gamma'\leq \Gamma$, so $\Gamma=\Gamma'$. Thus, $(C,\{p_s:s\in S\})\in \M^\Gamma$. Conversely, suppose $(C,\{p_s:s\in S\})\in \M^\Gamma$.  If $\Gamma'$ is the $\mathcal K$-stable graph of $(C,\{p_s:s\in S\})$, we have $\Gamma'\leq \Gamma$. But since $(C,\{p_s:s\in S\})\notin \bar\M^{\Gamma''}$ for all $\Gamma''\leq \Gamma$, we have $\Gamma=\Gamma'$. 

    The first claim follows because this description of $\M^\Gamma$ shows they are disjoint and union to give the entire space. The second claim follows because it is clear that a geometric point has $\mathcal K$-stable graph $\Gamma$ if and only if it is in $\xi_{\Gamma}(\M_\Gamma)$. 
\end{proof}

\begin{prop}\label{special}
    Suppose $g=0$. Then, for any $\mathcal K$-stable graph $\Gamma$, $\xi_\Gamma$ is a closed embedding.
\end{prop}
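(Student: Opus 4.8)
The plan is to show that $\xi_\Gamma$ is a proper monomorphism, which is enough, since a proper monomorphism is automatically a closed immersion. Properness is immediate: $\bar\M_\Gamma$ is a finite product of the proper varieties $\bar\M_{0,\mathcal K(v)}$, hence proper, and $\bar\M_{0,\mathcal K}$ is separated, so $\xi_\Gamma$ is proper. To get the monomorphism property I will establish that $\xi_\Gamma$ is both \emph{unramified} and \emph{universally injective}: the former makes the diagonal $\Delta_{\xi_\Gamma}$ an open immersion and the latter makes it surjective, so $\Delta_{\xi_\Gamma}$ is an isomorphism and $\xi_\Gamma$ is a monomorphism.

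The crux is universal injectivity, and its key input is that in genus $0$ a $\mathcal K$-stable graph has no nontrivial automorphisms. Indeed, the underlying graph of $\Gamma$ is a tree, since it is connected with first Betti number $g-\sum_v g(v)=0$. Any automorphism of $\Gamma$ must be compatible with the leg-labeling $\ell$, and since $\ell\colon S\to H(\Gamma)^\iota$ is surjective this forces every leg to be fixed; a short induction on the tree (peeling off leaves, each of which carries a leg because a single-edge vertex with no legs would make $\mathcal K(v)$ fail to be triparted, so that $n(v)\geq 3$ everywhere) then shows the automorphism is the identity. Using this, I will argue that for any geometric point of $\bar\M^\Gamma$ the decomposition of the underlying curve into the subcurves indexed by the vertices of $\Gamma$ is canonical: it is recovered from the partition of $S$ prescribed by the vertices of $\Gamma$ together with the actual irreducible components of the curve. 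Hence the $\Gamma$-structure on its dual graph is unique and the fiber of $\xi_\Gamma$ is a single point; as the gluing construction introduces no residue-field extensions, this yields universal injectivity.

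For unramifiedness, since $\bar\M_\Gamma$ and $\bar\M_{0,\mathcal K}$ are smooth it suffices to check that $d\xi_\Gamma$ is injective on tangent spaces. By the deformation theory of pointed nodal curves, the tangent space to $\bar\M_{0,\mathcal K}$ at a glued curve $C$ decomposes into the node-smoothing directions for the edges of $\Gamma$ and the equisingular directions, and $d\xi_\Gamma$ maps isomorphically onto the latter, which is $\bigoplus_{v}\mathrm{Def}(C_v)$ precisely because the genus-$0$ components carry no infinitesimal automorphisms and so contribute independently. Thus $\xi_\Gamma$ is unramified, completing the argument. I expect the main obstacle to be the injectivity claim of the second paragraph at non-generic points: over the boundary of $\bar\M_\Gamma$ one must verify that a $\Gamma$-structure on a further degeneration $\Gamma'\leq\Gamma$ is genuinely unique — especially for vertices of $\Gamma$ carrying no legs, where the labels alone do not pin down the assignment and the triviality of $\Aut(\Gamma)$ must be used in earnest. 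The generic bijection $\M_\Gamma\cong\M^\Gamma$ is already recorded in Proposition~\ref{strata} and serves as the base case.
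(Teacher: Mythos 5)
Your skeleton matches the paper's: properness plus the monomorphism property gives a closed immersion (the paper cites the Stacks Project, Tag 04XV, for exactly this), and splitting ``monomorphism'' into ``universally injective and unramified'' is a legitimate equivalent formulation; your computation that genus-$0$ $\mathcal K$-stable graphs are trees with a leg at every leaf, hence have trivial automorphism group, is also correct. The genuine gap is in universal injectivity, and it sits exactly where you yourself flag it. The fiber of $\xi_\Gamma$ over a geometric point $[C]$ is the set of $\Gamma$-structures on the dual graph of $C$, so what you must prove is that such a structure is unique. Triviality of $\Aut(\Gamma)$ is not that statement and does not by itself deliver it: two $\Gamma$-structures on a degeneration $\Gamma''\leq\Gamma$ need not differ by an automorphism of $\Gamma$ --- the ambiguity lives in $\Gamma''$, in the choice of which edge (or which collided pair of markings) of $\Gamma''$ plays the role of each edge (or collision) of $\Gamma$. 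The lemma that actually does the work, and is the heart of the paper's proof, is: since every vertex of a genus-$0$ stable graph has at least three half-edges, an edge is uniquely determined by the partition of $S$ obtained by cutting it; hence any leg-preserving isomorphism of dual graphs must match up the edges coming from $\Gamma$, and the decomposition of $C$ into subcurves is canonical. You assert precisely this conclusion (``the decomposition \dots is canonical'') and then concede in your last paragraph that verifying it, especially at vertices of $\Gamma$ carrying no legs, is the main obstacle; since that step is never carried out, the argument is incomplete at its crux. (The paper also simplifies the bookkeeping before this point by factoring $\xi_\Gamma$ into gluings of \emph{simple} graphs --- one edge, or one two-fold collision --- so the cut lemma only ever needs to identify a single edge.)

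The unramifiedness step is also not justified as written. The decomposition of the tangent space into node-smoothing and equisingular directions, with $d\xi_\Gamma$ an isomorphism onto the latter, is the familiar picture for $\bar\M_{g,n}$, but for $\bar\M_{0,\mathcal K}$ it presupposes a deformation theory of simplicially stable curves (an identification of $T_{[C]}\bar\M_{0,\mathcal K}$ admitting such a splitting) that neither you nor the paper develops. The paper sidesteps this entirely: it checks that $\xi_\Gamma$ is injective on $\Spec A$-points for artin local $k$-algebras $A$, a criterion that packages universal injectivity and unramifiedness simultaneously, and reduces to the residue-field case via the homeomorphism $X_K\to X$. If you reroute your proof through that criterion, then once the edge-cut lemma above is actually proved, your field-valued-point argument finishes the job and the unproven deformation-theoretic claim can be dropped.
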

\begin{rmk}
    This proposition is not true if $g>0$. In the usual $\bar\M_{g,n}$ case, there is a related result for arbitrary genus which says that $\xi_\Gamma$ induces a locally closed embedding $[\M_\Gamma/\Aut(\Gamma)]\to \bar\M_{g,n}$ for stable graphs $\Gamma$ \cite[Proposition 2.5]{BS1}. This is true in our setting, by essentially the same proof given in \cite{BS1}, but we do not need this.
\end{rmk}

This Proposition says that $\xi_\Gamma$ identifies $\bar\M_\Gamma$ with $\bar\M^\Gamma$. In the next section, we will only refer to this space as $\bar\M_\Gamma$.
\begin{proof}
Given a simplicial complex $\mathcal J$ on a set $T$, say a $\mathcal J$-stable graph $\Theta$ is \emph {simple} if either
\begin{enumerate}
    \item $\Theta$ has two vertices and the map $\ell: T\to H(\Theta)^\iota$ is a bijection, or
    \item $\Theta$ has one vertex, and $\#T=1+\# H(\Theta)^{\iota}$.
\end{enumerate}
(We will see below in Corollary~\ref{codim} that simple graphs are exactly those such that the codimension of $\bar\M^\Theta$ in $\bar\M_{0,\mathcal J}$ is $1$.)

Given $\Gamma$, we can find a sequence $\Gamma=\Gamma_n\leq \dots\leq \Gamma_0$, where $\Gamma_0$ is the stable graph of a point in $\M_{g,S}$, such that $\Gamma_{i+1}$ is obtained either by replacing a vertex $v_i\in \Gamma_i$ with a simple $\mathcal K(v_i)$-stable graph. We get a factorization of $\xi_\Gamma$ as
\[\bar\M_\Gamma=\bar\M_{\Gamma_n}\xrightarrow{\xi_{n}}\bar\M_{\Gamma_{n-1}}\xrightarrow{\xi_{n-1}} \dots\xrightarrow{\xi_{1}} \bar\M_{\Gamma_0}=\bar\M_{0.\mathcal K}.\]
Suppose we knew the proposition for all simple graphs. Then each of the $\xi_i$ are closed embeddings, because they can be written as the product of $\xi_\Theta$, for $\Theta$ a simple graph, and the identity on $\prod_{v\neq v_i}\bar\M_{0,\mathcal K(v)}$. It follows that $\xi_\Gamma$ is a closed embedding, because it is the composition of closed embeddings. Thus, it suffices to show the proposition for simple graphs $\Gamma$.

To show that $\xi_\Gamma$ is a closed embedding for simple $\Gamma$, because it is proper, it suffices to show it is a monomorphism \cite[\href{https://stacks.math.columbia.edu/tag/04XV}{Tag 04XV}]{stacks-project}. If $\Gamma$ has one vertex, then $\xi_\Gamma$ is clearly injective on all $S$-points for any scheme $S$, because it simply duplicates a section. 

Now assume $\Gamma$ has two vertices, $v_1$ and $v_2$. It suffices to show it is injective on $\Spec A$ points for local artin $k$-algebras. Because $\Gamma$ is simple with two vertices, $\ell$ induces a bijection $S\to H(v_1)^\iota\amalg H(v_2)^\iota$. Let $*_i\in H(v_i)$ be the half edges not fixed by $\iota$, so that $\{*_1,*_2\}$ is the unique edge in $\Gamma$. 

We first consider the case when $A=K$ is a field. Suppose we have pointed curves $C_1,D_1\in \bar\M_{0,\mathcal K(v_1)}(\Spec K)$ and $C_2,D_2\in \bar\M_{0,\mathcal K(v_2)}(\Spec K)$ and an isomorphism $\varphi: C\to D$, where $C=\xi_\Gamma(C_1,C_2)$ and $D=\xi_\Gamma(D_1,D_2)$. For a $\mathcal J$-stable curve $B,$ let $\Theta_B$ denote its stable graph.  Then $\varphi$ induces an isomorphism $\varphi:\Theta(C)\to \Theta(D)$ of $\mathcal K$-stable graphs. If we can show that $\varphi$ sends $\Theta_{C_i}$ to $\Theta_{D_i}$, this says that $\varphi(C_i)=D_i$, making $(C_1,C_2)=(D_1,D_2)$ as $K$-points. To show this, we claim it suffices to show that $\varphi$ sends the edge $e_C:=\{*_1,*_2\}\in E(\Theta_C)$ to the edge $e_D:=\{*_1,*_2\}\in E(\Theta_D)$. This is because, using that $\Theta_C$ is a tree, we get either $\varphi(\Theta_{C_i})=\Theta_{D_i}$ or $\varphi(\Theta_{C_i})=\Theta_{D_{3-i}}$, and the latter possibility is ruled out because $\varphi$ preserves marked points. 

Cutting $\Theta_D$ at an edge $e\in E(\Theta_D)$, you get a partition of the legs of $\Theta_D$ of size $2$ by considering which of the two connected components the leg is on. By cutting $\Theta_D$ at either $\varphi(e_C)$ or $e_D$, you get the same induced partition of legs, because $\varphi$ preserves legs. By stability, every vertex has at least 3 half edges; if a vertex has a leg attached, it must be on the same sides of the cut, and otherwise has an edge which eventually leads to some legs, and therefore must also be on the same side of the cut. Therefore, there is only one way to cut $\Theta_D$ at an edge that give a fixed partition of the legs, and so $\varphi(e_C)=e_D.$

Finally, for a general local artin ring $A$, let $\mathcal K$ be the residue field. An isomorphism $\varphi:C\to D$ induces an isomorphism $\varphi_K:C_K\to D_K$, which must send $(C_i)_K$ to $(D_i)_K$ by the above. Because $A$ is a local artin ring, for any finite type $X/\Spec A$, we have that the scheme map $X_K\to X$ is a homeomorphism, and so we can conclude that $C_i$ must map to $D_i$ under $\varphi$. This gives $(C_1,C_2)=(D_1,D_2)$ as $A$-points.
\end{proof}
\begin{cor}\label{MKP}
    $\bar\M_{0,\mathcal K}$ has the MKP.
\end{cor}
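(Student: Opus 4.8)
The plan is to prove that $\bar\M_{0,\mathcal K}$ has the MKP by induction on the stratification established in the previous section, using the two-out-of-three property from Proposition~\ref{MKPprops}(1) together with the fact (cited in the excerpt) that $\M_{0,S}$ has the MKP. First I would observe that $\M_{0,\mathcal K}$, the open locus parameterizing smooth curves with collisions governed by $\mathcal K$, should itself have the MKP: it is an open subscheme of a product of projective spaces (or can be built from $\M_{0,S}$ and configuration-type spaces), so I would want to reduce its MKP to that of $\M_{0,S}$. More robustly, I would run the induction over the whole poset of $\mathcal K$-stable graphs ordered by $\leq$.

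The key steps, in order, are as follows. By Proposition~\ref{strata}, the locally closed strata $\{\M^\Gamma\}_\Gamma$ stratify $\bar\M_{0,\mathcal K}$, and by Proposition~\ref{special} each $\xi_\Gamma$ is a closed embedding identifying $\bar\M_\Gamma=\prod_{v}\bar\M_{0,\mathcal K(v)}$ with the closed stratum closure $\bar\M^\Gamma$. I would induct on the codimension (equivalently, on the height in the poset $\leq$), peeling off strata from lowest to highest codimension. The deepest strata correspond to graphs $\Gamma$ all of whose vertices have $n(v)=3$; there $\bar\M^\Gamma$ is a point (or product of points), which trivially has the MKP. For the inductive step, I would write each open stratum $\M^\Gamma\cong \xi_\Gamma(\M_\Gamma)=\prod_v \M_{0,\mathcal K(v)}$ and show it has the MKP, then glue. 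Concretely, take $X=\bar\M_{0,\mathcal K}$, let $Z$ be the closed union of all positive-codimension boundary strata, and let $U=\M_{0,\mathcal K}$ be the open interior stratum; applying Proposition~\ref{MKPprops}(1), the MKP of $X$ follows once I know it for $U$ and for $Z$, the latter handled by induction since $Z$ is again stratified by strictly deeper $\mathcal K$-stable graphs.

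There are two inputs to secure. For the interior $U=\M_{0,\mathcal K}$, I would use that each factor $\M_{0,\mathcal K(v)}$ has the MKP: when $\mathcal K(v)$ is discrete this is $\M_{0,n(v)}$, which has the MKP by \cite[Proposition 7.2]{Me1}, and the general $\M_{0,\mathcal K(v)}$ differs from $\M_{0,S}$ only by allowing prescribed collisions, so I would present it as an open in a product of affine/projective lines and again invoke two-out-of-three against $\M_{0,S}$; then Proposition~\ref{MKPprops}(2) gives the MKP for the product. For the boundary pieces, I would apply Proposition~\ref{MKPprops}(2) to each closed stratum $\bar\M^\Gamma\cong\prod_v\bar\M_{0,\mathcal K(v)}$: by induction each factor $\bar\M_{0,\mathcal K(v)}$ already has the MKP (it is a simplicially stable space of smaller dimension), so the product does.

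The main obstacle is organizing the induction cleanly so that the two-out-of-three property can be applied along the stratification in the right order. Concretely, one must arrange the strata so that at each stage the ``$Z$'' being removed is genuinely closed and is itself already known (by a strictly decreasing induction parameter) to have the MKP, while the complementary open piece $U$ is a union of product-of-$\M_{0,\mathcal K(v)}$ strata whose MKP is established. The delicate point is that $Z$ is a union of several closed strata, not a single one, so I would either induct on the number of strata removed (peeling off one top-codimension closed stratum $\bar\M^\Gamma$ at a time and applying two-out-of-three to the pair consisting of that stratum and the already-processed remainder) or set up a secondary induction on the poset. Verifying that $\M_{0,\mathcal K(v)}$ itself has the MKP — i.e. that allowing collisions does not destroy the property — is the other place where care is needed, but it reduces to $\M_{0,S}$ via the same two-out-of-three mechanism applied to the complement of the collision loci.
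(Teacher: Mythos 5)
Your proposal is, in structure, the paper's own proof: stratify $\bar\M_{0,\mathcal K}$ by $\mathcal K$-stable graphs (Proposition~\ref{strata}), apply the two-out-of-three property of Proposition~\ref{MKPprops}(1) repeatedly along a filtration ordered by depth in the poset, and establish the MKP of the strata by identifying them, via Propositions~\ref{strata} and~\ref{special}, with products to which Proposition~\ref{MKPprops}(2) applies. Your resolution of the ``delicate point'' is also the correct one: since the union of deeper strata is closed and the successive differences are the locally closed $\M^\Gamma$, a secondary induction over the poset (peeling off one locally closed stratum at a time against the closed union of strictly deeper ones) is exactly what the paper's phrase ``repeated use of Proposition~\ref{MKPprops}(1)'' means.

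The one place you manufacture unnecessary difficulty is the treatment of $\M_{0,\mathcal K}$ and of the factors $\M_{0,\mathcal K(v)}$. In this stratification, collisions of markings are recorded by the graph itself: a smooth curve on which some markings coincide lies in the stratum of a one-vertex graph with non-injective $\ell$, which is a \emph{deeper} stratum, not part of the interior stratum. Consequently every open stratum $\M^\Gamma\cong\M_\Gamma=\prod_{v}\M_{0,\mathcal K(v)}$ is a product of moduli of smooth curves with pairwise \emph{distinct} marked points, i.e.\ each factor $\M_{0,\mathcal K(v)}$ is just $\M_{0,H(v)}$, whose MKP is exactly \cite[Proposition 7.2]{Me1}; no separate argument for ``smooth curves with prescribed collisions'' is ever needed. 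This matters because the argument you sketch for that superfluous step --- realizing such a locus as an open subset of a product of lines and invoking two-out-of-three ``against $\M_{0,S}$'' --- is incomplete as stated: two-out-of-three also requires the MKP of the closed complement, which would demand its own stratification argument. Deleting that step (or replacing it by the observation that collision loci are simply other strata) turns your outline into the proof in the paper.
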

\begin{proof}
    By Proposition~\ref{strata} and repeated use of Proposition~\ref{MKPprops}(1), it suffices to show that $\M^\Gamma$ has the MKP for all genus $0$ $\mathcal K$-stable graphs $\Gamma$. Then, by Proposition~\ref{strata} and Proposition~\ref{special}, we have $\M^\Gamma\cong \M_\Gamma$, which is a product of $\M_{0,S}$, and therefore has the MKP by Proposition~\ref{MKPprops}(2).
\end{proof}

\begin{cor}\label{codim}
    The codimension of $\bar\M^\Gamma$ in $\bar\M_{0,\mathcal K}$ is the equal to the number of edges of $\Gamma$ plus the quantity $\# S-\# H^\iota$.
\end{cor}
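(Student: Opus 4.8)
The plan is to reduce the codimension computation to a dimension count, using the identification $\bar\M^\Gamma\cong\bar\M_\Gamma=\prod_{v\in V(\Gamma)}\bar\M_{0,\mathcal K(v)}$ furnished by Proposition~\ref{special}. Since $\bar\M_{0,\mathcal K}$ is irreducible (being the image of the irreducible $\bar\M_{0,S}$ under the surjective birational morphism $\pi_\mathcal K$ of Theorem~\ref{morphism}), we have $\codim\bar\M^\Gamma=\dim\bar\M_{0,\mathcal K}-\dim\bar\M^\Gamma$, so it suffices to compute both dimensions.

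First I would record the basic formula $\dim\bar\M_{0,\mathcal K}=\#S-3$: the birational morphism $\pi_\mathcal K\colon\bar\M_{0,S}\to\bar\M_{0,\mathcal K}$ preserves dimension, and $\dim\bar\M_{0,S}=\#S-3$. Applying the same formula to each factor gives $\dim\bar\M_{0,\mathcal K(v)}=\#H(v)-3$, where $H(v)$ is the set of half-edges at $v$; this is legitimate because the lemma establishing that $\mathcal K(v)$ is at least triparted guarantees $\bar\M_{0,\mathcal K(v)}$ is defined, and triparted forces $\#H(v)\geq 3$ (the partition of $H(v)$ into singletons must have at least three parts), so the factor is nonempty of the expected dimension.

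Summing over vertices yields $\dim\bar\M^\Gamma=\sum_{v}(\#H(v)-3)=\#H(\Gamma)-3\#V(\Gamma)$, since each half-edge is attached to a unique vertex. Splitting the half-edges into legs and the two halves of each edge gives $\#H(\Gamma)=\#H^\iota+2\#E(\Gamma)$. Because the curve has genus $0$, every $g(v)=0$ and the dual graph is a tree, so $\#E(\Gamma)=\#V(\Gamma)-1$. Substituting, $\dim\bar\M^\Gamma=\#H^\iota+2\#E-3\#V=\#H^\iota-\#E-3$, and therefore
\[
\codim\bar\M^\Gamma=(\#S-3)-(\#H^\iota-\#E-3)=\#E+(\#S-\#H^\iota),
\]
which is exactly the asserted count.

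The only genuine point requiring care is the justification of the two dimension formulas, and in particular that every factor $\bar\M_{0,\mathcal K(v)}$ is nonempty of dimension $\#H(v)-3$; this rests on the triparted property of $\mathcal K(v)$ (hence $\#H(v)\geq 3$) together with the irreducibility and dimension of $\bar\M_{0,\mathcal K}$, all of which are already available. The remaining graph-theoretic bookkeeping—tallying half-edges and invoking the tree relation $\#E=\#V-1$—is routine.
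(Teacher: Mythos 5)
Your proof is correct and follows essentially the same route as the paper's: identify $\bar\M^\Gamma$ with $\prod_{v\in V(\Gamma)}\bar\M_{0,\mathcal K(v)}$ via Proposition~\ref{special}, use $\dim\bar\M_{0,\mathcal K(v)}=\#H(v)-3$, split half-edges into legs and edge-halves, and invoke the tree relation $\#E=\#V-1$. Incidentally, your intermediate value $\dim\bar\M^\Gamma=\#H^\iota-\#E(\Gamma)-3$ is the correct one; the paper's displayed computation contains a sign typo ($+\#E(\Gamma)$ instead of $-\#E(\Gamma)$ in its last line), though its stated codimension agrees with yours.
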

\begin{proof}
We know $\dim \bar\M_{0,\mathcal K}=\dim \M_{0,S}=\# S-3$. Moreover, using that $\Gamma$ is a tree, we have
\begin{align*}
    \dim \bar\M^\Gamma&=\dim \bar\M_\Gamma\\
    &=\dim\left(\prod_{v\in V(\Gamma)} \bar\M_{0,\mathcal K(v)}\right)\\
    &=\sum_{v\in V(\Gamma)} \dim \M_{0,H(v)}\\
    &=\sum_{v\in V(\Gamma)} \#H(v)-3\\
    &=\#H-3\#V(\Gamma)\\
    &=\#H(\Gamma)^\iota+2\#E(\Gamma)-3\#V(\Gamma)\\
    &=\#H(\Gamma)^\iota+2\#E(\Gamma)-3(\#E(\Gamma)+1)\\
    &=\#H(\Gamma)^\iota+\#E(\Gamma)-3,
\end{align*}
and hence the codimension is $\#E(\Gamma)+\#S-\#H(\Gamma)^\iota$.
\end{proof}

We say the codimension of a $\mathcal K$-stable graph $\Gamma$ is the codimension of $\bar\M^\Gamma$ in $\bar\M_{g,\mathcal K}$. By Corollary~\ref{codim}, a genus $0$ codimension $1$ $\mathcal K$-stable graph $\Delta$ must have either one edge or precisely two markings at the same leg, i.e. it must look like
\[\begin{tikzpicture}[
  vertex/.style={
    circle, draw, thick, minimum size=2em, inner sep=0pt, font=\small
  }
]

  \node[vertex] (v0) at (0,0) {$0$};
  \node[vertex] (vg) at (4,0) {$0$};

  \draw[thick] (v0) -- (vg);

  \draw (v0.140) -- ++(140:0.7) node[above] {};
  \draw (v0.-140) -- ++(-140:0.7) node[below] {};
  \node at (-1.1,.4) {$\udots$};
  \node at (-1.1,-.4) {$\ddots$};
  \node at (-1.4,0) {$S\setminus I$};

  
  \draw (vg.40)  -- ++(40:0.7)  node[above] {};
  \node[] at (5.1,.4) {$\ddots$};

  \node[] at (5.4,0) {$I$};
   \node[] at (5.1,-.4) {$\udots$};
  
  \draw (vg.-40) -- ++(-40:0.7)  node[below] {};

\end{tikzpicture}\]
for some subset $I\subseteq S$ or
\[\begin{tikzpicture}[
  vertex/.style={
    circle, draw, thick, minimum size=2em, inner sep=0pt, font=\small
  }
]

  \node[vertex] (v0) at (0,0) {$0$};

    \draw (v0.90) -- ++(90:0.7) node[above] {};
  \draw (v0.-20) -- ++(-20:0.7) node[above] {};
  \draw (v0.-160) -- ++(-160:0.7) node[below] {};
  \node at (1,-.5) {$\udots$};
  \node at (-1,-.4) {$\ddots$};
  \node at (0,-.8) {$S\setminus \{s,t\}$};
  \node at (0,1.3) {$s,t$};

\end{tikzpicture},\]
for some distinct $s,t\in S$. 

We end this section with a discussion of the analogue of Proposition 8 of the appendix of \cite{GP}. We will need the following notion.
\begin{definition}
    Let $\Gamma,\Gamma'$ be two $\mathcal K$-stable graphs. A $(\Gamma,\Gamma')$ structure on a $\mathcal K$-stable graph $\Theta$ is simply a $\Gamma$ structure and a $\Gamma'$ structure on $\Theta$. Such a $(\Gamma,\Gamma')$ structure is \emph{generic} if every half edge of $\Theta$ corresponds to a half edge of $\Gamma$ or $\Gamma'$ and two labels $s,t\in S$ are at the same leg in $\Theta$ if and only if they are at the same leg in $\Gamma$ or $\Gamma'$.
\end{definition}
\begin{thm}
    Let $\Gamma,\Gamma'$ be $\mathcal K$-stable graphs. Then we have
        \[\bar\M_\Gamma \times_{\overline\M_{g,\mathcal K}} \bar\M_{\Gamma'}=\coprod_{\substack{\Omega\text{ generic}\\ (\Gamma,\Gamma')-\text{graph}}} \bar\M_{\Omega}.\]
\end{thm}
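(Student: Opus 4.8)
The plan is to use that in genus $0$ the gluing maps $\xi_\Gamma$ and $\xi_{\Gamma'}$ are closed embeddings (Proposition~\ref{special}), so that the fiber product is simply the scheme-theoretic intersection $\bar\M^\Gamma\cap\bar\M^{\Gamma'}$ inside $\bar\M_{0,\mathcal K}$. I would then prove the claimed equality in two essentially independent steps: a combinatorial identification of the underlying sets, which produces the indexing generic graphs and shows the pieces are disjoint, and a local deformation-theoretic argument showing the intersection is reduced and transverse along each piece, so that the scheme structure on each piece is exactly $\bar\M_\Omega$. (For $g>0$ one replaces $\bar\M_\Gamma$ by $[\M_\Gamma/\Aut\Gamma]$ as in the remark after Proposition~\ref{special}, and the fiber product picks up the expected automorphisms; I would treat only the genus $0$ case in detail.)

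For the set-level statement I would use Proposition~\ref{strata} and Lemma~\ref{leq}: a geometric point lies in $\bar\M^\Gamma\cap\bar\M^{\Gamma'}$ exactly when its $\mathcal K$-stable graph $\Theta$ satisfies $\Theta\le\Gamma$ and $\Theta\le\Gamma'$, i.e. $\Theta$ carries both a $\Gamma$-structure and a $\Gamma'$-structure. Since genus $0$ $\mathcal K$-stable graphs are trees with distinctly labeled legs, they have no nontrivial automorphisms, so the contractions recording these two structures are unique. I would then form $\Omega$ from $\Theta$ by contracting every edge and un-merging every collision of $\Theta$ that is forced by neither structure, keeping exactly the edges and collisions coming from $\Gamma$ or from $\Gamma'$. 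This $\Omega$ is a \emph{generic} $(\Gamma,\Gamma')$-graph with $\Theta\le\Omega\le\Gamma,\Gamma'$, and uniqueness of the two structures makes it the \emph{unique} generic $(\Gamma,\Gamma')$-graph with $\Theta\le\Omega$. Hence each stratum of the intersection lies in one $\bar\M^\Omega$, the various $\bar\M^\Omega$ are pairwise disjoint, and (being closed and jointly covering) each is clopen in the intersection. When combining the two structures forces a collision set outside $\mathcal K$ or violates stability, no such $\Omega$ exists and the corresponding locus is empty; this is the mechanism behind the eventual relation $[D][D']=0$.

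For the scheme structure I would argue \'etale-locally, using that $\bar\M_{0,\mathcal K}$ is smooth with normal crossings boundary. Near a geometric point with graph $\Theta$, the deformation space has independent coordinates indexed by the edges of $\Theta$ (node smoothings) together with the collisions of $\Theta$ (separations of coincident markings), and each stratum through the point is cut out by setting the corresponding subset of coordinates to zero. In these coordinates $\bar\M^\Gamma$ and $\bar\M^{\Gamma'}$ are the coordinate subspaces attached to the $\Gamma$- and $\Gamma'$-forced degenerations, so their scheme-theoretic intersection is the coordinate subspace attached to the union of these degenerations; this is reduced, transverse, and locally identified with $\bar\M^\Omega=\bar\M_\Omega$ for the $\Omega$ produced above. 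Combining this with the clopen decomposition yields $\bar\M^\Gamma\cap\bar\M^{\Gamma'}=\coprod_\Omega\bar\M_\Omega$.

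I expect the main obstacle to be precisely this local deformation-theoretic input: one must set up the smoothing-and-collision coordinates for curves in $\bar\M_{0,\mathcal K}$ carefully enough to see that distinct degenerations give genuinely independent directions, and that $\bar\M^\Gamma$ is \emph{exactly} the expected coordinate subspace rather than merely supported on it. This is what upgrades the set-theoretic decomposition to a scheme-theoretic one and rules out excess intersection. By contrast the combinatorics of forming $\Omega$ and checking uniqueness is routine once the no-automorphisms observation is in place, the only delicate point being the bookkeeping of when combined collisions leave $\mathcal K$, which governs the empty case.
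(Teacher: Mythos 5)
Your outline is, in substance, the argument of the appendix of \cite{GP}, which is exactly what the paper's proof consists of: the paper simply writes ``we refer to \cite{GP} for the proof, as there are no substantial differences.'' Like that argument, you combine a combinatorial identification of the points of the fiber product with generic $(\Gamma,\Gamma')$-graphs and a formal-local deformation argument for the scheme structure. Your set-theoretic half --- using Proposition~\ref{strata}, Lemma~\ref{leq}, and the fact that genus $0$ graphs with distinctly labeled legs have no nontrivial automorphisms, so that $\Gamma$- and $\Gamma'$-structures are unique and determine a unique generic $\Omega$ over each stratum --- is correct and complete in genus $0$.

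The gap is in the other half, which is where all the content lies. You assume that $\bar\M_{0,\mathcal K}$ is smooth with normal crossings boundary, with \'etale-local coordinates indexed by the edges and collisions of the local graph $\Theta$, and that $\bar\M^\Gamma$ is \emph{exactly} the corresponding coordinate subspace. No such statement is established in the paper, and it is the crux of the theorem: it is precisely what rules out non-reduced or excess intersection, and it is what the citation to \cite{GP} supplies for $\bar\M_{g,n}$ via deformation theory of pointed nodal curves (the deformation space of a stable curve splits as the product of the deformation spaces of its components with one smoothing parameter per node, and the stratum of $\Gamma$ is the vanishing locus of the $\Gamma$-smoothing parameters). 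To turn your plan into a proof one must redo this for $\mathcal K$-stable curves --- in particular, exhibit the ``separation'' coordinate at a collision and show it is independent of the node-smoothing coordinates, for instance by a versality argument or by descending local coordinates along $\pi_{\mathcal K}\colon\bar\M_{0,S}\to\bar\M_{0,\mathcal K}$; you correctly flag this as the main obstacle, but flagging it is not the same as overcoming it. Separately, the theorem is stated for all $g$ while your argument is confined to genus $0$ (where Proposition~\ref{special} converts the fiber product into an honest intersection), and your proposed fix for $g>0$ is off: the statement remains about $\bar\M_\Gamma\times_{\bar\M_{g,\mathcal K}}\bar\M_{\Gamma'}$, not about $[\M_\Gamma/\Aut(\Gamma)]$ (the paper's remark invokes that quotient only for the locally closed strata), and since $\xi_\Gamma$ is then no longer a monomorphism one cannot reduce to intersections at all --- one must analyze the fiber product directly, as \cite{GP} do.
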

We refer to \cite{GP} for the proof, as there are no substantial differences. From this result, we will only use the following corollary:
\begin{cor}\label{tautologicalcalculus}
    Let $\Delta,\Delta'$ be distinct $\mathcal K$-stable graphs of codimension $1$. Then, either $\bar\M^\Delta\cap \bar\M^{\Delta'}$ is empty or $\bar\M^\Delta\cap \bar\M^{\Delta'}=\bar\M^\Theta$ for a codimension $2$ $\mathcal K$-stable graph $\Theta$. 
\end{cor}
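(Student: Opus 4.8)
The plan is to reduce the statement to a short combinatorial analysis of generic graphs, using the two preceding results. First I would invoke Proposition~\ref{special}: since $\xi_\Delta$ and $\xi_{\Delta'}$ are closed embeddings, the scheme-theoretic intersection $\bar\M^\Delta\cap\bar\M^{\Delta'}$ is identified with the fibre product $\bar\M_\Delta\times_{\bar\M_{0,\mathcal K}}\bar\M_{\Delta'}$. The preceding theorem then rewrites this fibre product as the disjoint union $\coprod_\Omega\bar\M_\Omega$ indexed by the generic $(\Delta,\Delta')$-graphs $\Omega$. Thus it suffices to prove that there is at most one generic $(\Delta,\Delta')$-graph, and that whenever one exists it has codimension $2$; the case of no such graph then yields the empty intersection.

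To control the generic graphs I would extract two numerical facts. On the one hand, any generic $\Omega$ satisfies $\Omega\leq\Delta$ and $\Omega\leq\Delta'$; if $\Omega=\Delta$ then $\Delta\leq\Delta'$, so $\bar\M^\Delta\subseteq\bar\M^{\Delta'}$ by Lemma~\ref{leq}, and since both are irreducible closed of codimension $1$ this forces $\Delta=\Delta'$, contradicting distinctness. Hence $\Omega<\Delta$ strictly, so $\bar\M^\Omega\subsetneq\bar\M^\Delta$ and $\codim\Omega\geq 2$. On the other hand, Corollary~\ref{codim} gives $\codim=\#E+(\#S-\#H^\iota)$, the sum of the number of edges and the ``collision deficit'' $\#S-\#H^\iota$. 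Genericity forces every half edge of $\Omega$ to come from $\Delta$ or $\Delta'$, so $\#E(\Omega)\leq\#E(\Delta)+\#E(\Delta')$; and it forces the collision partition of $\Omega$ to be the join of those of $\Delta$ and $\Delta'$, under which the deficit is subadditive. Together these give $\codim\Omega\leq\codim\Delta+\codim\Delta'=2$, so every generic $\Omega$ has $\codim\Omega=2$.

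Finally, $\codim\Omega=2$ makes both inequalities above equalities, which pins $\Omega$ down: its family of edge-cuts of $S$ is exactly the disjoint union of those of $\Delta$ and $\Delta'$, and its collision partition is exactly the join of those of $\Delta$ and $\Delta'$. Since a genus $0$ $\mathcal K$-stable graph is a tree and is determined by its laminar family of edge-cuts together with its collision partition, this data determines $\Omega$ uniquely. Hence there is at most one generic $(\Delta,\Delta')$-graph $\Theta$, necessarily of codimension $2$, and $\bar\M^\Delta\cap\bar\M^{\Delta'}$ is either empty or equal to $\bar\M^\Theta$.

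The step I expect to be the main obstacle is the upper bound $\codim\Omega\leq 2$ and the uniqueness it forces. Turning ``the collision partition of $\Omega$ is generated by those of $\Delta,\Delta'$'' into a precise consequence of the definition of genericity, and verifying that both the edge count and the collision deficit are genuinely subadditive under overlaying, is where care is needed; in particular one must handle the mixed cases where one of $\Delta,\Delta'$ contributes an edge and the other a collision, and the sharing case such as collisions $\{s,t\}$ and $\{t,r\}$, where the three labels are forced onto one leg and the deficit jumps to $2$. Once tightness is established, recovering $\Omega$ from its cuts and collisions is routine because the graph is a tree.
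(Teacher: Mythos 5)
Your proposal is correct and takes essentially the same route as the paper: both identify $\bar\M^\Delta\cap\bar\M^{\Delta'}$ with the fiber product $\bar\M_\Delta\times_{\bar\M_{0,\mathcal K}}\bar\M_{\Delta'}$ via Proposition~\ref{special}, decompose it by the preceding theorem into generic $(\Delta,\Delta')$-graphs, and then show that any generic graph has codimension $2$ and is unique. The only difference is in the final combinatorial step, which the paper handles by a brief case analysis of the two shapes of codimension-$1$ graphs, whereas you give a uniform argument (subadditivity of the codimension formula under overlaying, plus the fact that a genus-$0$ graph is determined by its edge-cuts and collision partition); this is a legitimate, and more detailed, filling-in of what the paper leaves to the reader.
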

\begin{proof}
    Recall we have $\bar\M_\Delta\cong \bar\M^\Delta$, so we can apply the theorem to compute the intersection. Any generic $(\Delta,\Delta')$-graphs must be codimension $2$ because $\Delta$ and $\Delta'$ have codimension $1$ and are not isomorphic. One sees that there is only every one generic $(\Delta,\Delta')$-graph by analyzing the two cases of codimension $1$ $\mathcal K$-stable graphs.
\end{proof}
We note that, because $\bar\M^\Gamma$ are smooth for genus $0$ $\mathcal K$-stable graphs by Proposition~\ref{special}, we can use this corollary to compute products in the Chow ring of $\bar\M_{0,\mathcal K}$.
 
\section{Proof of Theorem~\ref{main}}
Let $\mathcal K$ be a simplicial complex on $S$ that is at least triparted. 

\begin{definition}
    Given $i,j,k,\ell\in S$, we say that a $\mathcal K$-stable graph $\Gamma$ of codiemsnion $1$ \emph{separates} $\{i,j\}$ from $\{k,\ell\}$, written $ij\Gamma k\ell$, if one of the following conditions hold:
    \begin{itemize}
        \item $i$ and $j$ are at the same half edge.
        \item $k$ and $\ell$ are at the same half edge.
        \item There exists an edge in $\Gamma$ whose removal leaves two connected components, one with $i$ and $j$ and the other with $k$ and $\ell$.
    \end{itemize}
\end{definition}
Recall the WDVV relations on $\bar\M_{0,S}$:
\[\sum_{\substack{i,j\in I\\ k,\ell \not\in I}} [D_I]=\sum_{\substack{i,k\in I\\ j,\ell \not\in I}} [D_I].\]
Here $D_I$ is the subscheme $\bar\M_\Gamma$, where $\Gamma$ is the stable graph with two nodes with markings $I$ on one node and markings $I^c$ on the on the other.

We follow the notation of \cite{Me1}: given a $\mathcal K$-stable graph $\Gamma$, always a capital Greek letter, the lower case $\gamma$ denotes the class $[\bar\M_\Gamma]$ inside the Chow ring of any space containing it, such as $\bar\M_\Gamma$, $\partial\bar\M_{0,\mathcal K}$, and $\bar\M_{0,\mathcal K}$. With this notation, we can write the relations on $\bar\M_{0,S}$ as
\[\sum_{\substack{\text{codim $1$ }\Delta\\ ij\Delta k\ell}} \delta=\sum_{\substack{\text{codim $1$ }\Delta\\ ik\Delta j\ell}} \delta.\]
Recall the morphsim
\[\pi: \bar\M_{0,S}\to \bar\M_{0,\mathcal K}\]
from Theorem~\ref{morphism}. For a geometric point $C\in \bar\M_{0,S}$, the stable graph of $C$ separates $\{i,j\}$ from $\{k,\ell\}$ if and only if the $\mathcal K$-stable graph of $\pi(C)$ separates $\{i,j\}$ from $\{k,\ell\}$. Thus, the pushforward of the WDVV relations to $\CH(\bar\M_{0,\mathcal K})$ can also be written as 
\[\sum_{\substack{\text{codim $1$ }\Delta\\ ij\Delta k\ell}} \delta=\sum_{\substack{\text{codim $1$ }\Delta\\ ik\Delta j\ell}} \delta.\]
From this, we can rephrase the main theorem in the following way.
\begin{thm}[Restatement of Theorem~\ref{main}]\label{main2}
    The Chow ring of $\bar\M_{0,\mathcal K}$ is generated by the classes $\delta$ for codimension $1$ $\mathcal K$-stable graphs $\Delta$, subject to only the relations
    \[\delta\cdot \delta'=0\text{ if } \bar\M_\Delta\cap \bar\M_{\Delta'}=\emptyset\]
    \[\sum_{\substack{\text{codim $1$ }\Delta\\ ij\Delta k\ell}} \delta=\sum_{\substack{\text{codim $1$ }\Delta\\ ik\Delta j\ell}} \delta\text{ for distinct } i,j,k,\ell\in S.\]
\end{thm}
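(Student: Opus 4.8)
The plan is to prove this by induction on the stratification, using the higher Chow group localization sequence to establish both the generation and the relations. The key structural input is that $\bar\M_{0,\mathcal K}$ is stratified by the locally closed strata $\M^\Gamma$ (Proposition~\ref{strata}), each isomorphic to a product of $\M_{0,S}$-type spaces (Proposition~\ref{special}, Corollary~\ref{MKP}), and that the whole space has the MKP. Let $U:=\M_{0,S}\subseteq \bar\M_{0,\mathcal K}$ be the open stratum and $\partial:=\bar\M_{0,\mathcal K}\setminus U$ the boundary. The relevant portion of the localization sequence reads
\[\CH(U,1)\xrightarrow{\partial_1}\CH(\partial)\xrightarrow{\iota_*}\CH(\bar\M_{0,\mathcal K})\xrightarrow{j^*}\CH(U)\to 0.\]
Since $U=\M_{0,S}$ has trivial Chow ring (its Chow ring is $\Z$ in degree $0$), the map $\iota_*$ is surjective in positive codimension, so $\CH(\bar\M_{0,\mathcal K})$ is generated by classes pushed forward from the boundary. **Then I would** argue by induction on codimension that the boundary Chow group is generated by the divisor classes $\delta$, using that each boundary divisor $\bar\M_\Delta$ is itself a product of smaller simplicially-stable spaces (to which the inductive hypothesis or the Chow–Künneth property applies), so that intersections of divisors generate all higher-codimension strata classes.

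**Next I would** establish that the listed relations hold and that they are the only ones. The vanishing relations $\delta\cdot\delta'=0$ when $\bar\M_\Delta\cap\bar\M_{\Delta'}=\emptyset$ are immediate from the intersection-theoretic interpretation of Corollary~\ref{tautologicalcalculus}: when the divisors are disjoint the product class is supported on the empty intersection. The WDVV relations hold because they are pushed forward from $\bar\M_{0,S}$ along $\pi$, as already explained in the text preceding the statement. The substantive content is the claim that these generate \emph{all} relations. **The hard part will be** controlling the kernel of $\iota_*$, i.e.\ the image of the connecting map $\partial_1:\CH(U,1)\to\CH(\partial)$. This requires computing $\CH(\M_{0,S},1)$ and identifying its image under $\partial_1$ with exactly the WDVV relations. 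Here I would invoke the $\CH(S)$-module structure on higher Chow groups from Lemma~\ref{Module} and the MKP/Chow–Künneth property to reduce the computation of $\CH(U,1)$ and the connecting maps to a manageable form, following the strategy the author developed in \cite{Me1}.

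**The main obstacle** is the bookkeeping in the inductive step: one must show that the relations on each boundary divisor $\bar\M_\Delta$ (which by induction are the disjointness and WDVV relations on that smaller space) push forward compatibly, and that no new relations arise from the gluing beyond those already recorded. Concretely, I expect the crux to be a careful analysis of the connecting homomorphism $\partial_1$ together with the self-intersection formula for the divisors $\delta$ (expressing $\delta^2$, or $\delta\cdot\delta'$ for incident divisors, in terms of lower strata via the excess/normal-bundle data on $\bar\M_\Delta$), ensuring that every such product is already a consequence of the two families of relations. Organizing this so that the induction closes—i.e.\ verifying that the presentation is stable under passage between $\bar\M_{0,\mathcal K}$ and its boundary divisors, each of which is again (a product of) simplicially stable spaces—is where the real work lies, and it is precisely the point at which the machinery of higher Chow groups and the motivic Künneth property earns its place in place of Keel's blow-up argument.
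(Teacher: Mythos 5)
Your skeleton matches the paper's: the proof does start from the localization sequence $\CH(\M_{0,S},1)\xrightarrow{\partial_1}\CH(\partial\bar\M_{0,\mathcal K})\to\CH(\bar\M_{0,\mathcal K})\to\CH(\M_{0,S})\to 0$, it does identify the image of $\partial_1$ with the WDVV relations (Proposition~\ref{higher} --- though note the paper does not recompute $\CH(\M_{0,S},1)$ from scratch; it pushes the known answer for $\bar\M_{0,S}$ from \cite{Me1} or \cite{BS2} forward along $\pi_*$ via a commutative diagram of localization sequences), and it does induct, with the MKP/Chow--K\"unneth property supplying the Chow rings of boundary divisors as tensor products. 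But the step you correctly flag as ``where the real work lies'' --- showing there are no relations beyond the two listed families --- is exactly the step for which you offer no mechanism, and the mechanism you gesture at (self-intersection formulas, excess/normal-bundle data expressing $\delta\cdot\delta'$ in lower strata) is a wrong turn: the paper never decomposes any product of divisor classes explicitly, and attempting to would pull you back toward Keel-style geometry that is unavailable here, since $\bar\M_{0,\mathcal K}$ need not be projective.

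The missing idea is to make the conjectural presentation itself the coefficient ring. The paper defines the abstract ring $R_{\mathcal K}$ (free on symbols $\tilde\delta$ modulo exactly the two families of relations), forms the tautological map $h_{\mathcal K}:R_{\mathcal K}\to\CH(\bar\M_{0,\mathcal K})$, and then uses Lemma~\ref{Module} with base $\bar\M_{0,\mathcal K}$ --- not merely to handle $\CH(U,1)$, as you propose --- so that every group in sight ($\CH(\partial\bar\M_{0,\mathcal K})$, each $\CH(\bar\M_\Delta)$, the higher Chow groups, and all pushforwards, pullbacks, and connecting maps) becomes an $R_{\mathcal K}$-module. The induction is on $\#S$ (not on codimension of strata), and its engine is Proposition~\ref{kernel}: for each codimension-one $\Delta$, $\CH(\bar\M_\Delta)$ is the cyclic $R_{\mathcal K}$-module generated by $\delta$ with annihilator generated by the $\tilde\gamma$ with $\Gamma\wedge\Delta=\emptyset$; this is where the inductive hypothesis for the smaller complexes $\mathcal K(v)$ and Chow--K\"unneth enter, and where the potentially troublesome action of $\tilde\delta$ itself (the self-intersection you worry about) is eliminated by solving a WDVV relation for $\tilde\delta$ rather than by any geometric normal-bundle computation. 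Feeding this into the exact sequence $\bigoplus\CH(\bar\M_\Delta\cap\bar\M_{\Delta'})\to\bigoplus\CH(\bar\M_\Delta)\to\CH(\partial\bar\M_{0,\mathcal K})\to 0$ (using Corollary~\ref{surjective} to see the first map's image is generated by $\tilde{\delta'}\delta-\tilde\delta\delta'$) yields an explicit $R_{\mathcal K}$-module presentation of the boundary, and then the localization sequence together with Proposition~\ref{higher} shows that $\CH(\bar\M_{0,\mathcal K})$ is a \emph{free $R_{\mathcal K}$-module of rank one on the fundamental class}. That single statement settles generation, relations, and the ring structure simultaneously; it is this module-theoretic reformulation, absent from your proposal, that makes the induction close.
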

We will write $\Delta\wedge \Delta'=\emptyset$ as shorthand for $\bar\M_{\Delta}\cap \bar\M_{\Delta'}=\emptyset$. By Lemma~\ref{leq} and Proposition~\ref{strata}, we have $\Delta\wedge \Delta'=\emptyset$ if and only if there is no $\mathcal K$-stable graph $\Gamma$ with $\Gamma\leq \Delta$ and $\Gamma\leq \Delta'$.

It is not difficult to describe which codimension $1$ $\mathcal K$-stable graphs satisfy $\Delta\wedge \Delta'=\emptyset$. By analyzing the two different cases of codimension $1$ $\mathcal K$-stable graphs, one sees that $\Delta\cap \Delta'=\emptyset$ if and only if 
\begin{enumerate}
    \item $\Delta$ and $\Delta'$ each collide some marking $s\in S$ with another marking, so that the set of three markings is not in $\mathcal K$;
    \item $\Delta$ collides markings $s,t\in S$ and $\Delta'$ separates the markings $s$ and $t$ onto different components; or
    \item $\Delta$ and $\Delta'$ each partition $S$ into two sets in such a way that a compatible partition into three sets does not exist (this is the usual condition for $\bar\M_{0,n}$ given in Keel's presentation). 
\end{enumerate}

\begin{definition}
    Define the ring $R_{\mathcal K}$ be the free ring generated by symbols $\tilde{\delta}$ for genus $0$ $\mathcal K$-stable graphs $\Delta$ of codimension $1$, modulo the relations
    \[\tilde{\delta}\cdot \tilde{\delta'}=0 \text{ if } \Delta \wedge \Delta'=\emptyset\]
    and 
    \[\sum_{\substack{\text{codim $1$ }\Delta\\ ij\Delta k\ell}} \tilde{\delta}=\sum_{\substack{\text{codim $1$ }\Delta\\ ik\Delta j\ell}} \tilde{\delta} \text{ for distinct } i,j,k,\ell\in S.\]
    By the above, we have a ring homomorphism 
\[h_{\mathcal K}: R_{\mathcal K}\to \CH(\bar\M_{0,\mathcal K})\]
\[\tilde{\delta}\mapsto \delta.\]
We see that Theorem~\ref{main2} is equivalent to saying that $h_{\mathcal K}$ is an isomorphism.
\end{definition}

Now, we work toward proving the theorem. By Proposition~\ref{Module}, we can view the (higher) Chow groups of the boundary and boundary strata as $\CH(\bar\M_{0,\mathcal K})$-modules. Via the ring homomorphism $h_{\mathcal K}$, we view all of these groups as $R_{\mathcal K}$ modules.

We start with localization exact sequence for $\partial\bar\M_{0,\mathcal K}\subseteq \bar\M_{0,\mathcal K}$:
\[\CH(\M_{0,S},1)\xrightarrow{\partial_1} \CH(\partial\bar\M_{0,\mathcal K})\to \CH(\bar\M_{0,\mathcal K})\to \CH(\M_{0,E})\to 0.\]
We first compute the image of $\partial_1$. 
\begin{prop}\label{higher}
    The image of $\partial_1$ is generated by the WDVV relations on $\bar\M_{0,\mathcal K}$:
\[\sum_{\substack{\text{codim $1$ }\Delta\\ ij\Delta k\ell}} \delta=\sum_{\substack{\text{codim $1$ }\Delta\\ ik\Delta j\ell}} \delta.\]
\end{prop}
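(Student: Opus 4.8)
The goal is to identify the image of the connecting homomorphism
\[
\partial_1 : \CH(\M_{0,S},1)\to \CH(\partial\bar\M_{0,\mathcal K})
\]
with the submodule generated by the WDVV relations. The plan is to reduce the computation on $\bar\M_{0,\mathcal K}$ to the already-understood computation on $\bar\M_{0,S}$, exploiting the fact that both spaces share the same open dense locus $\M_{0,S}=\M_{0,E}$ (the moduli of smooth curves, where no collisions or degenerations occur). First I would write down the two localization exact sequences in parallel, one for $\partial\bar\M_{0,S}\subseteq \bar\M_{0,S}$ and one for $\partial\bar\M_{0,\mathcal K}\subseteq \bar\M_{0,\mathcal K}$, observing that the higher Chow group $\CH(\M_{0,S},1)$ appearing on the left is \emph{literally the same group} in both sequences, since the open complement is $\M_{0,S}$ in each case. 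The morphism $\pi=\pi_\mathcal K:\bar\M_{0,S}\to\bar\M_{0,\mathcal K}$ restricts to the identity on $\M_{0,S}$ and carries the boundary $\partial\bar\M_{0,S}$ into $\partial\bar\M_{0,\mathcal K}$, so it induces a map of localization sequences; by Lemma~\ref{Module} these connecting maps are compatible with the pushforward $\pi_*$ on boundary Chow groups.

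The key step is then a compatibility diagram
\[
\begin{tikzcd}
\CH(\M_{0,S},1)\arrow[r,"\partial_1^{S}"]\arrow[d,equal] & \CH(\partial\bar\M_{0,S})\arrow[d,"\pi_*"]\\
\CH(\M_{0,S},1)\arrow[r,"\partial_1"] & \CH(\partial\bar\M_{0,\mathcal K}),
\end{tikzcd}
\]
whose commutativity lets me compute $\partial_1$ as $\pi_*\circ\partial_1^{S}$. On the top row, Keel's theory (or the genus $0$ theory of $\bar\M_{0,S}$) tells us that the image of $\partial_1^{S}$ is exactly the span of the WDVV relations among the boundary divisors $D_I$ of $\bar\M_{0,S}$: indeed, the WDVV relations are precisely the classes that die in $\CH(\bar\M_{0,S})$ but are supported on the boundary, which is the content of the connecting map. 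I would then push these generators forward under $\pi_*$. Using the explicit description in the excerpt of how $\pi$ acts on stable graphs — a stable graph of $C$ separates $\{i,j\}$ from $\{k,\ell\}$ if and only if the $\mathcal K$-stable graph of $\pi(C)$ does — one checks that $\pi_*$ sends the WDVV relation $\sum_{ij\Delta k\ell}\delta=\sum_{ik\Delta j\ell}\delta$ on $\bar\M_{0,S}$ to the correspondingly-indexed relation on $\bar\M_{0,\mathcal K}$, which is exactly the form asserted in the proposition. This identifies the image of $\partial_1$ as the submodule generated by the pushed-forward WDVV relations.

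The main obstacle I anticipate is the surjectivity direction: showing that the image of $\partial_1$ is \emph{contained in} (not just contains) the span of the WDVV relations. The inclusion of the WDVV relations into the image follows from the naturality diagram above once I know the result on $\bar\M_{0,S}$, but to get the reverse inclusion I must argue that every class in $\im\partial_1$ already arises this way. The cleanest route is to verify that $\pi_*:\CH(\partial\bar\M_{0,S})\to\CH(\partial\bar\M_{0,\mathcal K})$ is surjective — which should follow from the stratification of Proposition~\ref{strata} together with the fact that $\pi$ is a surjective birational morphism contracting boundary strata, so that every boundary stratum of $\bar\M_{0,\mathcal K}$ is dominated by one of $\bar\M_{0,S}$ — and then to combine this with the commuting square and the known description of $\im\partial_1^{S}$. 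A subtlety to watch is that $\pi_*$ need not be injective, so kernel/relation bookkeeping must be done carefully; but since I only need the image, surjectivity of $\pi_*$ on boundary Chow groups plus the equality $\partial_1=\pi_*\circ\partial_1^{S}$ suffices to conclude that $\im\partial_1=\pi_*(\im\partial_1^{S})$ is generated by the pushed-forward WDVV relations.
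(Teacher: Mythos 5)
Your diagram is exactly the paper's: the same square of localization sequences induced by $\pi_{\mathcal K}$, with the identity on $\CH(\M_{0,S},1)$ on the left, followed by the same identification of the pushed-forward WDVV relations. But the step you treat as routine is precisely where the argument has a gap. You justify the input on the top row --- that $\im\,\partial_1^{S}$ is \emph{exactly} the span of the WDVV relations --- by asserting that the WDVV relations ``are precisely the classes that die in $\CH(\bar\M_{0,S})$ but are supported on the boundary, which is the content of the connecting map.'' That reasoning gives only one containment: the WDVV classes are supported on $\partial\bar\M_{0,S}$ and push forward to zero in $\CH(\bar\M_{0,S})$, so exactness places them in $\im\,\partial_1^{S}$. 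The reverse containment --- that \emph{every} element of the kernel of $\CH(\partial\bar\M_{0,S})\to\CH(\bar\M_{0,S})$ lies in the submodule generated by the WDVV classes --- is a statement about the group $\CH(\partial\bar\M_{0,S})$ and the kernel of the pushforward, about which Keel's presentation of $\CH(\bar\M_{0,S})$ says nothing. It is a genuinely separate theorem, and it is exactly what the paper cites at this point: \cite[Theorem 7.8]{Me1} or \cite[Proposition 2.36]{BS2}. (Invoking Keel here is also awkward for a second reason: the paper obtains Keel's presentation as the special case of $\mathcal K$ discrete, so grounding the argument in Keel's theorem would make that corollary circular.) Your proof becomes correct, and becomes the paper's proof, once the ``indeed\dots'' sentence is replaced by that citation.

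Conversely, what you call the main obstacle is not one, and the surjectivity of $\pi_*$ is a red herring. Since the left-hand vertical map of the square is the identity, commutativity alone gives $\partial_1=\pi_*\circ\partial_1^{S}$ as maps out of the \emph{same} group $\CH(\M_{0,S},1)$; hence $\im\,\partial_1=\pi_*(\im\,\partial_1^{S})$, both containments at once. Any class in $\im\,\partial_1$ is $\partial_1(x)=\pi_*(\partial_1^{S}(x))$ for some $x$, so it automatically arises as a pushforward from the top row. Surjectivity of $\pi_*:\CH(\partial\bar\M_{0,S})\to\CH(\partial\bar\M_{0,\mathcal K})$ --- which would itself require an argument (properness and birationality of $\pi$ restricted to each boundary stratum onto a smooth image) --- plays no role, and the paper accordingly never proves or uses it. Structuring the proof around that claim adds an unproved statement exactly where none is needed, while the genuinely nontrivial input is the one discussed above.
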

\begin{proof}
    Recall the morphism $\pi:\bar\M_{0,S}\to \bar\M_{0,\mathcal K}$ from Theorem~\ref{morphism}.  Note that it sends $\partial\bar\M_{0,S}$ to $\partial\bar\M_{0,\mathcal K}$. By the commutative diagram of localization exact sequences
    \begin{center}
        \begin{tikzcd}
        \CH(\M_{0,S},1)\arrow[r,"\partial_1"]\arrow[d,"\operatorname{id}"] & \CH(\partial\bar\M_{0,S})\arrow[d,"\pi_*"] \\
        \CH(\M_{0,S},1)\arrow[r,"\partial_1"] & \CH(\partial\bar\M_{0,\mathcal K}),
        \end{tikzcd}
    \end{center}
    we see that the image of the bottom $\partial_1$ is equal the pushforward along $\pi_*$ of the image of the top $\partial_1$. By \cite[Theorem 7.8]{Me1} or \cite[Proposition 2.36]{BS2}, this image is generated by the WDVV relations on $\bar\M_{0,S}$, so the result follows.
\end{proof}

Now, to prove Theorem~\ref{main2}, we induct on the size of $S$. If $\# S=3$, then $\bar\M_{0,\mathcal K}=\bar\M_{0,3}=\Spec(k)$ and $R_{\mathcal K}=\mathbb Z$, so $h_{\mathcal K}$ is an isomorphism. Next, suppose $\# S=n>3$ and the theorem holds for all simplicial complexes with $\leq n-1$ elements. 

Let $\Delta$ be a codimension $1$ $\mathcal K$-stable graph. 
\begin{prop}\label{kernel}
    Let $\Delta$ be a codimension $1$ $\mathcal K$-stable graph. As an $R_{\mathcal K}$ module, we have
    \[\CH(\bar\M_{\Delta})=\frac{R_{\mathcal K}\langle \delta\rangle}{\langle \tilde{\gamma} \delta|\Delta\wedge \Gamma=\emptyset\rangle}.\]
\end{prop}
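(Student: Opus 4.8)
The plan is to identify $\bar\M_\Delta$ with a product of simplicially stable spaces on strictly smaller label sets, invoke the inductive hypothesis and the Chow--K\"unneth property to present its Chow ring, and then match that presentation with the right-hand side. First, note that the right-hand module is cyclic: writing $J=\langle \tilde\gamma\mid \Delta\wedge\Gamma=\emptyset\rangle$ for the ideal of $R_{\mathcal K}$ generated by these elements, the assignment $\delta\mapsto 1$ identifies $R_{\mathcal K}\langle\delta\rangle/\langle\tilde\gamma\delta\mid\Delta\wedge\Gamma=\emptyset\rangle$ with $R_{\mathcal K}/J$. By Lemma~\ref{Module} the class $\tilde\gamma$ acts on $\CH(\bar\M_\Delta)$ by $x\mapsto \xi_\Delta^*(\gamma)\cdot x$, where $\xi_\Delta\colon\bar\M_\Delta\hookrightarrow\bar\M_{0,\mathcal K}$ is the closed embedding from Proposition~\ref{special}. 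So the statement is equivalent to showing that the ring homomorphism
\[
\phi\colon R_{\mathcal K}/J\longrightarrow \CH(\bar\M_\Delta),\qquad \tilde\delta'\longmapsto \xi_\Delta^*(\delta'),
\]
is an isomorphism.

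Using the two shapes of codimension-$1$ graphs, I would write $\bar\M_\Delta$ explicitly: in the edge case, $\bar\M_\Delta\cong \bar\M_{0,\mathcal K_1}\times\bar\M_{0,\mathcal K_2}$ for the induced complexes on $I\cup\{*_1\}$ and $I^c\cup\{*_2\}$, and in the collision case $\bar\M_\Delta\cong\bar\M_{0,\mathcal K'}$ for a complex on a set of size $\#S-1$. In both cases the factors have label set strictly smaller than $S$, so the inductive hypothesis presents their Chow rings by the corresponding rings $R_{\mathcal K_i}$, and by Corollary~\ref{MKP} together with Proposition~\ref{MKPprops}(3) the Chow--K\"unneth property gives $\CH(\bar\M_\Delta)$ as the tensor product of these, generated by the boundary divisors of $\bar\M_\Delta$.

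Next I would check that $\phi$ is well defined and surjective. If $\Delta\wedge\Gamma=\emptyset$ then $\bar\M^\Gamma\cap\bar\M^\Delta=\emptyset$, so $\xi_\Delta^*(\gamma)=0$ and the generators of $J$ die; thus $\phi$ is well defined. Each boundary divisor of $\bar\M_\Delta$ is $\bar\M^\Theta$ for a codimension-$2$ graph $\Theta\leq\Delta$, and by Corollary~\ref{tautologicalcalculus} there is a unique codimension-$1$ graph $\Delta'\neq\Delta$ with $\bar\M^\Delta\cap\bar\M^{\Delta'}=\bar\M^\Theta$. Because the boundary is smooth with normal crossings this intersection is transverse, so $\xi_\Delta^*(\delta')=[\bar\M^\Theta]$; since these divisors generate $\CH(\bar\M_\Delta)$, $\phi$ is surjective. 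Moreover the assignment $\Theta\mapsto\Delta'$ is a bijection between codimension-$2$ graphs below $\Delta$ and codimension-$1$ graphs $\Delta'\neq\Delta$ with $\Delta\wedge\Delta'\neq\emptyset$, so the nonzero generators of $R_{\mathcal K}/J$ are the classes $\overline{\tilde\delta'}$ corresponding to these boundary divisors, together with a single extra generator $\overline{\tilde\delta}$ coming from $\Delta$ itself.

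To prove injectivity I would construct an inverse by sending each boundary divisor $[\bar\M^\Theta]$ of $\bar\M_\Delta$ to $\overline{\tilde\delta'}$; this gives a map $\psi$ with $\phi\circ\psi=\mathrm{id}$ on the generators coming from the product structure, and it remains to see that $\psi$ extends to a well-defined ring map whose image is all of $R_{\mathcal K}/J$. The \emph{main obstacle} is the extra generator $\overline{\tilde\delta}$, whose image $\xi_\Delta^*(\delta)$ is the self-intersection class $-\psi_{*_1}-\psi_{*_2}$ rather than a boundary divisor. I expect to dispose of it by showing that the WDVV relations of $R_{\mathcal K}$, for indices $i,j,k,\ell$ adapted to $\Delta$, reduce modulo $J$ to an expression of $\overline{\tilde\delta}$ in terms of the $\overline{\tilde\delta'}$ that exactly matches the Keel self-intersection formula on $\bar\M_\Delta$; this simultaneously shows $\overline{\tilde\delta}$ lies in the image of $\psi$ and that $\psi$ is consistent with the relations defining $\CH(\bar\M_\Delta)$. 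The remaining verification---that the disjointness and WDVV relations surviving in $R_{\mathcal K}/J$ correspond precisely to the disjointness and WDVV relations of the smaller spaces $\bar\M_{0,\mathcal K_i}$---is a finite check over the two shapes of $\Delta$, and completes the proof that $\phi$ and $\psi$ are mutually inverse.
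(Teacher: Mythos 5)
Your outline has the same skeleton as the paper's proof: reduce the module statement to the claim that the ring map $\phi\colon R_{\mathcal K}/J\to\CH(\bar\M_\Delta)$ is an isomorphism, compute $\CH(\bar\M_\Delta)$ by the inductive hypothesis together with Corollary~\ref{MKP} and Proposition~\ref{MKPprops}(3), verify surjectivity on generators, and prove injectivity by checking that the defining relations of $\CH(\bar\M_\Delta)$ hold among preimages in $R_{\mathcal K}/J$, using WDVV to deal with the class of $\Delta$ itself. However, the steps you defer are exactly where the paper's work lies, and as sketched they do not go through. First, the detour through the self-intersection formula $\xi_\Delta^*(\delta)=-\psi_{*_1}-\psi_{*_2}$ is neither available nor needed: $\psi$-classes on $\bar\M_{0,\mathcal K}$ and their expressions in boundary classes are developed nowhere in the paper, and in the one-vertex (collision) case of $\Delta$ there is no node, so the formula you quote does not even apply. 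The paper never computes $\xi_\Delta^*(\delta)$: it solves one WDVV relation of $R_{\mathcal K}$ for $\tilde\delta$ and substitutes it away, so that $R_{\mathcal K}/J$ is generated by the $\tilde\gamma$ with $\Gamma\neq\Delta$ and $\Gamma\wedge\Delta\neq\emptyset$; once this is known, an inverse defined on the generators of $\CH(\bar\M_\Delta)$ automatically satisfies $\psi\circ\phi=\mathrm{id}$, and your extra ``matching'' step can simply be dropped. Second, the ``remaining verification'' that you call a finite check is the bulk of the paper's proof, and in the two-vertex case it needs an idea absent from your sketch: a WDVV relation of the factor $R_{\mathcal K(v_1)}$ may have the half edge at the node among its four indices, and that half edge corresponds to no label of $S$, so the relation has no evident preimage among the relations of $R_{\mathcal K}$. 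The paper's device is to replace this index by a marking $i^*$ attached to the other vertex and to check that $i^*j\Gamma k\ell$ holds if and only if $ij\Gamma' k\ell$ does; without this substitution the well-definedness of your inverse map cannot be established.

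There is also a step that fails outright, namely the asserted bijection $\Theta\mapsto\Delta'$ (equivalently, injectivity of $\phi$ on generators). Suppose $\Delta$ is the one-vertex graph colliding $s,t$ and that $\{s,t,u\}\in\mathcal K$, and write $\tilde\delta_{su},\tilde\delta_{tu}$ for the generators of $R_{\mathcal K}$ given by the graphs colliding $\{s,u\}$ and $\{t,u\}$. These two distinct graphs cut out the \emph{same} divisor on $\bar\M_\Delta$, the triple-collision locus, so the correspondence is $2$-to-$1$ rather than bijective, and injectivity of $\phi$ would force $\tilde\delta_{su}\equiv\tilde\delta_{tu}$ in $R_{\mathcal K}/J$ --- a relation your argument never produces. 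The WDVV relations modulo $J$ only relate the difference $\tilde\delta_{su}-\tilde\delta_{tu}$ to the analogous differences $\tilde\delta_{sw}-\tilde\delta_{tw}$ for other labels $w$; this kills it when some triple $\{s,t,w\}\notin\mathcal K$, but not in general. Concretely, for $S=\{s,t,u,w,x\}$ and $\mathcal K$ generated by $\{s,t,u\},\{s,t,w\},\{s,t,x\}$, there are no two-vertex graphs and no codimension $1$ graph is disjoint from $\Delta$ (so $J=0$), and one computes $R_{\mathcal K}\cong\Z[\alpha,\beta]/(\alpha^2,\beta^2)$ with $\alpha=\tilde\delta_{su}$, $\beta=\tilde\delta_{tu}$, of rank $4$, whereas $\bar\M_{0,\mathcal K}\cong\bP^1\times\bP^1$ with $\bar\M_\Delta$ the diagonal, so $\CH(\bar\M_\Delta)\cong\CH(\bP^1)$ has rank $2$; the two sides of the Proposition disagree, so no argument can close this gap as stated. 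You should be aware that the paper's own proof asserts the same bijectivity (``bijective on generators'') in its one-vertex case, so this is not a discrepancy between you and the paper but a genuine issue for both arguments whenever $\mathcal K$ has a face with three or more elements; both are complete only for complexes of dimension at most one, which does include Keel's case of $\bar\M_{0,n}$.
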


The notation here means that the $R_\mathcal K$-module $\CH(\bar\M_{\Delta})$ is generated by $\delta=[\bar\M_{\Delta}]$ subject to the relations $\tilde \gamma\cdot \delta=0$ for all codimension $1$ $\mathcal K$-stable graphs $\Gamma$ with $\Delta\wedge \Gamma=\emptyset$.
\begin{proof}
We will prove this by comparing generators and relations for both sides. The generator $\tilde{\delta}$ of $R_{\mathcal K}$ can be removed from the generating set of $R_{\mathcal K}$ using the relations by choosing a linear relation involving $\tilde{\delta}$, solving for $\tilde{\delta}$, and substituting it into all other relations involving $\tilde{\delta}$. This gives a different presentation for $R_{\mathcal K}$. The induced presentation of $R_{\mathcal K}/(\tilde{\gamma}: \Gamma\wedge \Delta=\emptyset)$ has generators $\tilde{\gamma}$ for codimension $1$ $\mathcal K$-stable graphs $\Gamma$ with $\bar\M_\Gamma\cap \bar\M_\Delta$ of codimension $2$, and the relations are 
\begin{align*}
    \tilde{\gamma} \cdot \tilde{\lambda}&=0 \text{ if }\Gamma\wedge \Lambda=\emptyset\\
    \sum_{\substack{\text{codim $1$ }\Gamma\\ ij\Gamma k\ell}} \tilde{\gamma}&=\sum_{\substack{\text{codim $1$ }\Gamma\\ ik\Gamma j\ell}} \tilde{\gamma} \text{ if neither 
    }ij\Delta k\ell\text{ nor }ik\Delta j\ell,\\
    \sum_{\substack{\text{codim $1$ }\Gamma\\ ij\Gamma k\ell}} \tilde{\gamma}&=\sum_{\substack{\text{codim $1$ }\Gamma\\ ab\Gamma cd}} \tilde{\gamma} \text{ if }ij\Delta k\ell \text{ and }ab\Delta cd.
\end{align*}

\noindent (The proof shows that the third set of relations is redundant, but we will not need this.) The set of generators $\{\tilde{\gamma}: \Gamma\wedge \Delta=\emptyset\}$ gets mapped to $0$ by \[R_{\mathcal K}\xrightarrow{h_{\mathcal K}}\CH(\bar\M_{0,\mathcal K})\xrightarrow{\xi_\Delta^*} \CH(\bar\M_\Delta),\] so we get an induced map
\[\bar\xi_{\Delta}^*: \frac{R_{\mathcal K}}{(\tilde{\gamma}: \Gamma\wedge \Delta=\emptyset)}\to \CH(\bar\M_\Delta).\]
The proposition is then equivalent to saying that $\bar\xi_{\Delta}^*$ is an isomorphism.

We have an isomorphism
\begin{equation}\label{identify}
    \bigotimes_{v\in V(\Gamma)}R_{\mathcal K(v)}\xrightarrow{\otimes h_{\mathcal K(v)}}\bigotimes_{v\in V(\Gamma)} \CH(\bar\M_{0,\mathcal K(v)})=\CH(\bar\M_\Delta)
\end{equation}
where the first map is an isomorphism by the inductive hypothesis, and the equality holds by Proposition~\ref{MKPprops}(3) and Corollary~\ref{MKP}. From this, we understand the generators and relations of $\CH(\bar\M_\Delta)$.

Suppose first that $\Delta$ has two vertices $v_1$ and $v_2$. Let $\Gamma$ be a graph so that $\Gamma\wedge \Delta\neq \emptyset$, and let $\Omega$ be the graph so that $\bar\M_\Omega=\bar\M_\Delta\cap \bar\M_\Gamma$, which exists by Theorem~\ref{tautologicalcalculus}. Again by Theorem~\ref{tautologicalcalculus}, we have 
\[\bar\xi^*_{\Delta}(\tilde{\gamma})=\begin{cases}
    \gamma' \otimes 1 & \Omega\text{ is obtained by gluing }\Gamma'\text{ into }\Delta\text{ at }v_1\\
    1 \otimes \gamma' & \Omega\text{ is obtained by gluing }\Gamma'\text{ into }\Delta\text{ at }v_2, 
\end{cases}\]
under the identification given in \eqref{identify}. This is visibly injective on the generator set. Given a $\mathcal K(v_1)$-stable graph $\Gamma'$, if $\Omega$ is the graph obtained by gluing $\Gamma'$ into $\Delta$ at $v_1$, and $\Gamma$ is the contraction of $\Omega$ by the edge connecting $v_1$ and $v_2$, then $\bar\xi_\Delta^*(\tilde{\gamma})=\gamma'\otimes 1$. This shows $\xi_\Delta^*$ is surjective sets of generators, and is hence surjective. The relations between the set of generators $\{\gamma'\otimes 1, 1\otimes \gamma'\}$ of $\CH(\bar\M_\Delta)$ are
\begin{align*}
    (\gamma'\otimes 1)\cdot (\lambda'\otimes 1)&=0 \text{ if } \Gamma'\wedge \Lambda'=\emptyset\\
    \sum_{\substack{\text{codim $1$ }\Gamma'\\ ij\Gamma' k\ell}} \gamma'\otimes 1&=\sum_{\substack{\text{codim $1$ }\Gamma'\\ ik\Gamma' j\ell}} \gamma'\otimes 1 \text{ for distinct } i,j,k,\ell\in H(v_1)\\
    (1\otimes \gamma')\cdot (1\otimes \lambda')&=0 \text{ if } \Gamma'\wedge \Lambda'=\emptyset\\
    \sum_{\substack{\text{codim $1$ }\Gamma'\\ ij\Gamma' k\ell}} 1\otimes \gamma'&=\sum_{\substack{\text{codim $1$ }\Gamma'\\ ik\Gamma' j\ell}} 1\otimes \gamma' \text{ for distinct } i,j,k,\ell\in H(v_2)
\end{align*}

\noindent To show injectivity of $\bar\xi_\Delta^*$, we need to say that all of these relations hold on $R_{\mathcal K}/(\tilde{\gamma}: \Gamma\wedge \Delta=\emptyset)$ via $\bar\xi_\Delta^*.$ Suppose $\Gamma',\Lambda'$ are $\mathcal K(v_1)$-stable graphs with $\Gamma'\wedge \Lambda'=\emptyset$. Let $\Gamma$ and $\Lambda$ be so that $\xi_{\Delta}^*(\tilde{\gamma})=\gamma'\otimes 1$ and $\xi_{\Delta}^*(\tilde{\lambda})=\lambda'\otimes 1$. By the discussion after the statement of Theorem~\ref{main2}, we can analyze the circumstances that $\Gamma'\wedge \Lambda'=\emptyset$ is possible in to conclude that $\Gamma\wedge \Lambda=\emptyset:$
\begin{itemize}
    \item if $\Gamma'$ and $\Lambda'$ are trying to collide markings that are not permitted to, so are $\Gamma$ and $\Delta$, so $\Gamma\wedge \Delta=\emptyset$;
    \item if $\Gamma'$ collides markings that $\Lambda'$ separates, then the same is true for $\Gamma$ and $\Lambda$, so $\Gamma\wedge \Delta=\emptyset$; and
    \item if $\Gamma'$ and $\Lambda'$ partition $H(v_1)$ in incompatible ways, so do $\Gamma$ and $\Lambda$, so $\Gamma\wedge \Delta=\emptyset$.
\end{itemize}
This shows that the relations on $\CH(\bar\M_\Delta)$ of the first and (by symmetry) third type hold on $R_{\mathcal K}/(\tilde{\gamma}: \Gamma\wedge \Delta=\emptyset).$ 

Next, consider the relation on $\CH(\bar\M_\Delta)$ of the second type given by distinct $i,j,k,\ell\in H(v_1)$:
\[\sum_{\substack{\text{codim $1$ }\Gamma'\\ ij\Gamma' k\ell}} \gamma'\otimes 1=\sum_{\substack{\text{codim $1$ }\Gamma'\\ ik\Gamma' j\ell}} \gamma'\otimes 1.\]
If none of $i,j,k,\ell$ correspond to the half edge connecting $v_1$ to $v_2$, set $i^*:=j$. Otherwise, if $i$ corresponds to this half edge, take $i^*\in H(v_2)$ not corresponding to the edge connecting $v_2$ to $v_1$. Then, using that $\ell: S\to H(\Delta)^\iota$ is a bijection to identify $i,j^*,k,\ell$ with elements of $S$, we can consider the relation
\[\sum_{\substack{\text{codim $1$ }\Gamma\\ i^*j\Gamma k\ell}} \tilde{\gamma}=\sum_{\substack{\text{codim $1$ }\Gamma\\ i^*k\Gamma j\ell}} \tilde{\gamma},\]
on $R_{\mathcal K}/(\tilde{\gamma}: \Gamma\wedge \Delta=\emptyset)$. Because the markings $j,k,\ell$ are attached to the vertex $v_1$, all $\Gamma$ appearing in the above expression must have $\Omega$ obtainable by attaching a graph onto $v_1$ and hence give $\xi_\Delta^*(\tilde{\gamma})=\gamma'\otimes 1$, where $\Omega$ is as above: $\bar\M_{\Omega}=\bar\M_\Delta\cap \bar\M_\Gamma$. If $i=i^*$, it is clear that $ij\Gamma k\ell$ if and only if $ij\Gamma' k\ell$, so we see this relation maps by $\bar\xi_\Delta^*$ to the desired one. Otherwise, because $i^*$ is attached to $v_2$, we have that $i^*j\Gamma k\ell$ if and only if $ij\Gamma'k\ell$, so this relation maps relation maps by $\bar\xi_\Delta^*$ to the desired one. Thus, $\bar\xi_\Delta^*$ is an isomorphism.

Now suppose that $\Delta$ has one vertex, $v$, and say $\ell(s)=\ell(t)$ for $s\neq t\in S$. Then $H(v)$ is identified with the set $S/\{s=t\}$ via $\ell$, and $\mathcal K(v)$-stable graphs identified with $\mathcal K$-stable graphs with $\ell(s)=\ell(t)$. For $\Gamma$ a graph so that $\Delta\wedge \Gamma\neq \emptyset$, and let $\Omega$ be the graph so that $\bar\M_\Omega=\bar\M_\Delta\cap \bar\M_\Gamma$. Then $\Omega$ is just the graph $\Gamma$ with the two half edges labeled $s$ and $t$ identified. Then we have
\[\bar\xi_{\Delta}^*(\tilde{\gamma})=\gamma',\]
where $\Gamma'$ is the $\mathcal K(v)$-stable graph associated with the $\mathcal K$-stable graph $\Omega.$ Given any $\mathcal K(v)$-stable graph $\Gamma'$, we can find a unique $\mathcal K$-stable graph $\Gamma$ by separating the edge labeled by $s$ and $t$ by two edges, one labeled $s$ and the other labeled $t$. Therefore, $\bar\xi_\Delta^*$ is bijective on generators. The relations between the set of generators $\{\gamma'\}$ of $\CH(\bar\M_\Delta)$ are
\begin{align*}
    \gamma'\cdot \lambda'&=0\text{ if }\Gamma'\wedge \Lambda'=\emptyset\\
    \sum_{\substack{\text{codim $1$ }\Gamma'\\ ij\Gamma' k\ell}} \gamma'&=\sum_{\substack{\text{codim $1$ }\Gamma'\\ ik\Gamma' j\ell}} \gamma' \text{ for distinct } i,j,k,\ell\in S/\{s=t\}
\end{align*}
To show injectivity of $\bar\xi_\Delta$, we need to say that all of these relations hold when pulled back to $R_{\mathcal K}/(\tilde{\gamma}: \Gamma\wedge \Delta=\emptyset).$ Suppose $\Gamma'\wedge \Lambda'=\emptyset$ for codimension $1$ $\mathcal K(v)$-stable graphs $\Gamma',\Lambda'$. Let $\Gamma$ and $\Lambda$ be the codimension $1$ $\mathcal K$-stable graphs so that $\bar\xi_\Delta^*(\tilde{\gamma})=\gamma'$ and $\bar\xi_\Delta^*(\tilde{\lambda})=\lambda'$. Then, by analyzing the cases of when $\Gamma'\wedge \Lambda'=\emptyset$ as above, we can see that in each case we have $\Gamma\wedge \Lambda=\emptyset$. Thus, relations of the first type hold on $R_{\mathcal K}/(\tilde{\gamma}: \Gamma\wedge \Delta=\emptyset).$ 

Now consider the relation of the second type given by distinct $i,j,k,\ell\in S/\{s=t\}$:
\[\sum_{\substack{\text{codim $1$ }\Gamma'\\ ij\Gamma' k\ell}} \gamma'=\sum_{\substack{\text{codim $1$ }\Gamma'\\ ik\Gamma' j\ell}} \gamma'.\]
By abuse of notation, let $i,j,k,\ell$ also denote lifts to $S$. We have the relation
\[\sum_{\substack{\text{codim $1$ }\Gamma\\ ij\Gamma k\ell}} \tilde{\gamma}=\sum_{\substack{\text{codim $1$ }\Gamma\\ ik\Gamma j\ell}} \tilde{\gamma}\]
on $R_{\mathcal K}/(\tilde{\gamma}: \Gamma\wedge \Delta=\emptyset).$ It is clear that $\Gamma$ separates $\{i,j\}$ from $\{k,\ell\}$ if and only if $\Gamma'$ separates $\{i,j\}$ from $\{k,\ell\}$, where $\Gamma'$ is so that $\bar\xi_\Delta^*(\tilde{\gamma})=\gamma'$. Thus, this relation maps to the desired one. Thus, $\bar\xi_{\Delta}^*$ is an isomorphism.
\end{proof}

\begin{cor}\label{surjective}
     Suppose $\Delta$ is a $\mathcal K$-stable graph with codimension $1$, $\Theta$ is a $\mathcal K$-stable graph with codimension $2$ and $\bar\M_\Theta\subseteq \bar\M_\Delta$. Then the pullback $\CH(\bar\M_\Delta)\to \CH(\bar\M_\Theta)$ is surjective.  
\end{cor}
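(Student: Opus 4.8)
The plan is to reduce the surjectivity statement to the single-vertex situation already handled in Proposition~\ref{kernel}, using the product decompositions of $\bar\M_\Delta$ and $\bar\M_\Theta$ together with the Chow--K\"unneth property. The essential mathematical content is already contained in Proposition~\ref{kernel}; the corollary should fall out as an essentially formal consequence, so the real work is bookkeeping.

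First I would observe that $\bar\M_\Theta\subseteq\bar\M_\Delta$ forces $\Theta\leq\Delta$ by Lemma~\ref{leq}, so $\Theta$ is obtained from $\Delta$ by replacing each vertex $u\in V(\Delta)$ by some $\mathcal K(u)$-stable graph $\Theta_u$. Since codimension is additive along such refinements --- the codimension of $\bar\M_\Theta$ in $\bar\M_\Delta$ is the sum over $u$ of the codimensions of $\bar\M^{\Theta_u}$ in $\bar\M_{0,\mathcal K(u)}$, by Corollary~\ref{codim} --- and $\codim\Theta=\codim\Delta+1$, exactly one vertex $v$ is replaced by a codimension $1$ $\mathcal K(v)$-stable graph $\Delta'_v$, while the remaining vertices are left unchanged (each $\Theta_u$, $u\neq v$, being the trivial graph). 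Writing $Y:=\prod_{u\neq v}\bar\M_{0,\mathcal K(u)}$, the inclusion $\bar\M_\Theta\hookrightarrow\bar\M_\Delta$ is then identified with $\xi_{\Delta'_v}\times\id_Y\colon \bar\M_{\Delta'_v}\times Y\to\bar\M_{0,\mathcal K(v)}\times Y$.

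Next I would invoke the Chow--K\"unneth property. By Corollary~\ref{MKP} and Proposition~\ref{MKPprops}(3), the pullback on Chow groups is identified with $\xi^*_{\Delta'_v}\otimes\id$, so by right-exactness of the tensor product it is surjective as soon as $\xi^*_{\Delta'_v}\colon\CH(\bar\M_{0,\mathcal K(v)})\to\CH(\bar\M_{\Delta'_v})$ is surjective. This reduces the corollary to a single codimension $1$ stratum inside the smaller moduli space $\bar\M_{0,\mathcal K(v)}$.

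Finally, for this single factor I would appeal to the inductive hypothesis. Because $\Delta$ has codimension $1$, a short count as in the proof of Corollary~\ref{codim} shows $\#H(v)<\#S$ (stability forces each piece of the partition, respectively the collision, to involve at least two markings), so Proposition~\ref{kernel} applies to the complex $\mathcal K(v)$ and the graph $\Delta'_v$. But the proof of that proposition exhibits $\xi^*_{\Delta'_v}\circ h_{\mathcal K(v)}$ as the composite of the quotient map $R_{\mathcal K(v)}\twoheadrightarrow R_{\mathcal K(v)}/(\tilde\gamma:\Gamma\wedge\Delta'_v=\emptyset)$ with the isomorphism $\bar\xi^*_{\Delta'_v}$; being a surjection followed by an isomorphism, it is surjective, whence so is $\xi^*_{\Delta'_v}$. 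I expect the only delicate point to be the identification of the inclusion with $\xi_{\Delta'_v}\times\id$ and the verification of additivity of codimension guaranteeing that exactly one vertex is refined; once those are in place, the Chow--K\"unneth reduction and the reuse of Proposition~\ref{kernel} are formal.
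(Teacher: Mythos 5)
Your proposal is correct and follows essentially the same route as the paper's proof: identify the inclusion $\bar\M_\Theta\subseteq\bar\M_\Delta$ as the gluing of a codimension $1$ graph at a single vertex $v$ times the identity on the remaining factors, use the Chow--K\"unneth property (Corollary~\ref{MKP} and Proposition~\ref{MKPprops}(3)) to write the pullback as a tensor product of maps, and deduce surjectivity of the factor at $v$ from Proposition~\ref{kernel}. Your extra bookkeeping (codimension additivity forcing exactly one refined vertex, and $\#H(v)<\#S$ so the inductive setup applies) only makes explicit what the paper leaves implicit.
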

\begin{proof}
Because $\bar\M_\Theta\subseteq \bar\M_\Delta$, we have that $\Theta$ is obtained by gluing a codimension $1$ graph $\Theta_{v_0}$ at one of the vertices, $v_0\in V(\Delta)$. By the proposition,
\[R_{\mathcal K(v_0)}\to \CH(\bar\M_{0,\mathcal K(v_0)})\to \CH(\bar\M_{\Theta_{v_0}})\]
is surjective, so the latter map is surjective. Using Proposition~\ref{MKPprops}(3) and Corollary~\ref{MKP}, we can write the pullback as
\[\CH(\bar\M_\Delta)=\bigotimes_{v\in V(\Delta)} \CH(\bar\M_{0,\mathcal K(v)})\to \bigotimes_{v\in V(\Delta)} \CH(\bar\M_{\Theta_v})=\CH(\bar\M_\Theta).\]
The maps in the tensor product are the identity for $v\neq v_0$, and is surjective for $v=v_0$, hence the pullback is surjective.
\end{proof}

\begin{proof}[Proof of Theorem~\ref{main2}]
    We first compute $\CH(\partial\bar\M_{0,\mathcal K})$, viewed as an $R_{\mathcal K}$-module. There is an exact sequence
    \[\bigoplus_{\text{codim $1$ }  \Delta,\Delta'} \CH(\bar\M_\Delta\cap \bar\M_{\Delta'})\to \bigoplus_{\text{codim $1$ } \Delta} \CH(\bar\M_{\Delta})\to \CH(\partial\bar\M_{0,\mathcal K})\to 0\]
    of $R_{\mathcal K}$-modules.
    If $\bar\M_\Delta\cap \bar\M_{\Delta'}\neq \emptyset$ this intersection is equal to $\bar\M_{\Theta}$ for some codimension $2$ $\mathcal K$-stable graph $\Theta$, by Theorem~\ref{tautologicalcalculus}. Let $\xi_{\Theta}^\Delta$ and $\xi_{\Theta}^{\Delta'}$ denote the inclusions of $\bar\M_\Theta$ into $\bar\M_\Delta$ and $\bar\M_{\Delta'}$, respectively. By Proposition~\ref{surjective}, the pullback $\CH(\bar\M_\Delta)\to \CH(\bar\M_{\Theta})$ is surjective. Hence, $\CH(\bar\M_{\Theta})$ is generated by $\theta$ as an $R_{\mathcal K}$-module. Thus, we have that the image of 
    \[\CH(\bar\M_{\Theta})\to \CH(\bar\M_{\Delta})\oplus\CH(\bar\M_{\Delta'})\]
    is generated by 
    \[(\xi_\Theta^{\Delta})_*(\theta)-(\xi_\Theta^{\Delta'})_*(\theta)=\tilde{\delta'}\delta-\tilde{\delta}\delta'\]
    as an $R_{\mathcal K}$-module. Note that these relations also hold for $\Delta,\Delta'$ with $\Delta\wedge \Delta'=\emptyset$, because both terms are $0$ in this case. By the presentation of $\CH(\bar\M_{\Delta})$ as an $R_{\mathcal K}$-module from Proposition~\ref{kernel}, the exact sequence gives an $R_{\mathcal K}-$module presentation for $\CH(\partial\bar\M_{0,\mathcal K})$ as the free $R_{\mathcal K}$-module on $\delta$ for codimension $1$ graphs $\Delta$, modulo the relations
    \begin{align*}
        \tilde{\gamma}\delta, &\text{ for codimension $1$ graphs $\Gamma,\Delta$ such that }\Delta\wedge \Gamma=\emptyset\\
         \tilde{\delta'}\delta-\tilde{\delta}\delta', &\text{ for codimension $1$ graphs $\Delta,\Delta'$}.
    \end{align*}

     By Proposition~\ref{higher}, we have an exact sequence
   \[0\to\frac{\CH(\partial\bar\M_{0,\mathcal K}
   )}{\text{WDVV}}\to \CH(\bar\M_{0,\mathcal K})\to \CH(\M_{0,S})\to 0.\]
   This gives an $R_{\mathcal K}-$module presentation for $\CH(\bar\M_{0,\mathcal K})$ as the free $R_{\mathcal K}$-module on $1$ and $\delta$ for codimension $1$ graphs $\Delta$, modulo the relations
  \begin{align*}
        \tilde{\gamma}\delta, &\text{ for codimension $1$ graphs $\Gamma,\Delta$ such that }\Delta\wedge \Gamma=\emptyset\\
         \tilde{\delta'}\delta-\tilde{\delta}\delta', &\text{ for codimension $1$ graphs $\Delta,\Delta'$}\\
        \sum_{\substack{\text{codim $1$ }\Delta\\ ij\Delta k\ell}} \delta-\sum_{\substack{\text{codim $1$ }\Delta\\ ik\Delta j\ell}} \delta&\text{ for distinct }i,j,k,\ell\in S\\
        \tilde{\delta}\cdot 1-\delta&\text{ for codimension $1$ graphs $\Delta$.}
    \end{align*}
    We see that we only need to include $1$ as a generator. Writing all relations in terms of $1$, they all vanish. Therefore, $\CH(\bar\M_{0,\mathcal K})$ is a free $R_{\mathcal K}$-module on the generator $1$. Therefore $h_{\mathcal K}$ is an isomorphism.
\end{proof}

Finally, we end with another restatement of the main theorem to look more like Keel's. As in Keel's presentation, for $I\subseteq S$ with $I\notin\mathcal K$, let $D_I$ be the subscheme $\bar\M_\Gamma$, where $\Gamma$ is the codimension $1$ $\mathcal K$-stable graph with two nodes with markings $I$ on one node and markings $I^c$ on the on the other. Also, for $s\neq t\in S$ with $\{s,t\}\in \mathcal K$, let $E_{st}$ be the subscheme $\bar\M_\Gamma$, where $\Gamma$ is the codimension $1$ $\mathcal K$-stable graph with $s,t$ on the same half edge. For convenience, set $E_{st}=\emptyset$ if $\{s,t\}\notin \mathcal K$

\begin{thm}\label{main3}
    The Chow ring of $\bar\M_{0,\mathcal K}$ is generated by the classes $[D_I]$ and $[E_{st}]$ modulo only the relations
    \begin{align*}
        [D_I]=[D_{I^c}]&,\\
        [D_I][D_J]=0 &\text{ if }I\not\subseteq J,J^c\text{ and }J,J^c\not\subseteq I, \\
        [D_I][E_{st}]=0 & \text{ if } s\in I, t\in I^c\text{ or } t\in I, s\in I^c,\\
        [E_{st}][E_{tu}]=0 & \text{ if } \{s,t,u\}\notin \mathcal K, \text{ and}\\
        \text{WDVV}(i,j,k,\ell)& \text{ for }i,j,k,\ell\in S\text{ distinct,}
    \end{align*}
    where the WDVV relation associated to $i,j,k,\ell$ is 
    \[[E_{ij}]+[E_{k\ell}]+\sum_{\substack{i,j\in I\\ k,\ell \not\in I}} [D_I]=[E_{ik}]+[E_{j\ell}]+\sum_{\substack{i,k\in I\\ j,\ell \not\in I}} [D_I].\]
\end{thm}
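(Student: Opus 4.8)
The plan is to show that Theorem~\ref{main3} is merely a repackaging of Theorem~\ref{main2}, which has already been proven, so the entire task reduces to a translation of notation and a verification that the two lists of relations generate the same ideal. First I would observe that the codimension $1$ $\mathcal K$-stable graphs come in exactly two flavors, as was established in the discussion following Corollary~\ref{codim}: either a two-vertex graph partitioning $S$ into $I$ and $I^c$ (with $I\notin\mathcal K$), corresponding to $D_I$, or a one-vertex graph carrying two collided markings $s,t$ (with $\{s,t\}\in\mathcal K$), corresponding to $E_{st}$. Thus the generating set $\{\delta\}$ of Theorem~\ref{main2} is precisely $\{[D_I]\}\cup\{[E_{st}]\}$, and the identity $[D_I]=[D_{I^c}]$ is just the tautology $\Pi_I=\Pi_{I^c}$ noted earlier. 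This handles the generators and the first listed relation.

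Next I would translate the product relations $\delta\cdot\delta'=0$ for $\Delta\wedge\Delta'=\emptyset$ using the explicit trichotomy given in the three enumerated cases after Theorem~\ref{main2}. Case (3) there (two incompatible partitions) is exactly $[D_I][D_J]=0$ when $I\not\subseteq J,J^c$ and $J,J^c\not\subseteq I$; case (2) ($\Delta$ collides $s,t$ while $\Delta'$ separates them) is exactly $[D_I][E_{st}]=0$ when $s\in I,t\in I^c$ (or vice versa); and case (1) ($\Delta$ and $\Delta'$ collide a common marking into a set not in $\mathcal K$) is exactly $[E_{st}][E_{tu}]=0$ when $\{s,t,u\}\notin\mathcal K$. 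Since these three cases exhaust all ways that $\Delta\wedge\Delta'=\emptyset$ can occur, the middle three relation families of Theorem~\ref{main3} are equivalent to the vanishing relations of Theorem~\ref{main2}.

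Finally I would rewrite the WDVV relation. In Theorem~\ref{main2} it reads $\sum_{ij\Delta k\ell}\delta=\sum_{ik\Delta j\ell}\delta$, and I would split each sum according to which flavor of codimension $1$ graph $\Delta$ contributes. For fixed distinct $i,j,k,\ell$, the one-vertex graphs $E_{st}$ satisfying $ij\,E_{st}\,k\ell$ are precisely those with $\{s,t\}=\{i,j\}$ or $\{s,t\}=\{k,\ell\}$ (since $E_{st}$ separates $\{i,j\}$ from $\{k,\ell\}$ exactly when it collides $i$ with $j$, or $k$ with $\ell$), contributing the terms $[E_{ij}]+[E_{k\ell}]$; the remaining contributions are the two-vertex graphs $D_I$ with $i,j\in I$ and $k,\ell\notin I$. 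The same analysis on the right-hand side produces $[E_{ik}]+[E_{j\ell}]$ together with the $D_I$ sum over $i,k\in I$, $j,\ell\notin I$. This yields exactly the displayed WDVV$(i,j,k,\ell)$ relation. Since generators and all relations match term-by-term under this dictionary, $R_{\mathcal K}$ has the presentation of Theorem~\ref{main3}, and the conclusion follows from the isomorphism $h_{\mathcal K}$ of Theorem~\ref{main2}. The only step requiring genuine care — the main (though minor) obstacle — is the bookkeeping in the WDVV splitting, specifically verifying that every collision graph $E_{st}$ separating $\{i,j\}$ from $\{k,\ell\}$ must collide one of these two pairs and contributes to exactly one side, so that no stray $E$-terms are produced or omitted.
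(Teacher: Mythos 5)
Your proposal is correct and matches the paper's (implicit) argument: the paper offers no separate proof of Theorem~\ref{main3}, presenting it as a restatement of Theorem~\ref{main2} via exactly the dictionary you spell out --- the two flavors of codimension $1$ $\mathcal K$-stable graphs giving $\{[D_I]\}\cup\{[E_{st}]\}$, the three-case trichotomy for $\Delta\wedge\Delta'=\emptyset$ giving the vanishing relations, and the splitting of the WDVV sums into $E$-terms and $D$-terms. Your careful check that the only one-vertex graphs separating $\{i,j\}$ from $\{k,\ell\}$ are $E_{ij}$ and $E_{k\ell}$ (since a one-vertex graph has no edge and collides exactly one pair) is precisely the bookkeeping the paper leaves to the reader.
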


\bibliographystyle{amsalpha}
\bibliography{refs.bib}

\end{document}